\NeedsTeXFormat{LaTeX2e} 

\documentclass{amsart}
\usepackage{amsfonts}
\usepackage{amsmath,amsthm}
\usepackage{amsfonts,amssymb}

\usepackage{enumerate}

\hfuzz1pc 



\newtheorem{thm}{Theorem}[section]

\newtheorem{lem}[thm]{Lemma}

\newtheorem{rem}[thm]{Remark}

\numberwithin{equation}{section}

\newcommand{\al}{\alpha}

\def\lz{\lambda}
\def\Lz{\Lambda}

\def\sz{\sigma}

\def\({\Bigl(}
\def \){ \Bigr)}

 \def\RR{{\mathbb R}}

 \def\supp{\operatorname{supp}}

\def\sz{\sigma}

\def\ss{{\Bbb S}^{d-1}}
\def\bd{\mathbb{B}^d}

\begin{document}
\def\RR{\mathbb{R}}
\def\Exp{\text{Exp}}
\def\FF{\mathcal{F}_\al}

\title[] {Entropy numbers of weighted Sobolev classes on the unit sphere }

\begin{abstract}
We obtain the asymptotic orders of entropy numbers of Sobolev classes on the unit sphere with Dunkl weight which associates with the general finite reflection
group. Moreover, the
asymptotic order of entropy numbers of weighted Sobolev classes on the unit ball and on the standard simplex are discussed.
\end{abstract}
\keywords{Entropy numbers; Unit sphere; Finite reflection group; Dunkl weight; Discretization.}
\subjclass[2010]{41A25, 41A46.}
\author{Heping Wang} \address{ School of Mathematical Sciences, Capital Normal
University,
Beijing 100048,
 China.}
\email{wanghp@cnu.edu.cn.}
\author{Kai Wang}
\address{ School of Science, Langfang Normal University,
Langfang 065000, China.} \email{cnuwangk@163.com.}

\thanks{
 The first author was partially supported by the National Natural Science Foundation of China (11671271), and the Beijing Natural Science Foundation (1172004). The second author was partially supported by the National Natural Science Foundation of China (11801245), Natural Science Foundation of Hebei Province (A2018408044), and the Foundation of Education Department of Hebei Province (QN2017127). }


\maketitle
\input amssym.def

\section{Introduction}

The purpose of this paper is to study the entropy numbers of weighted Sobolev
space on the sphere. Let $\mathbb{S}^{d-1}:=\{x:\|x\|=1\}$
denote the unit sphere in $\mathbb{R}^d$ endowed with the rotation invariant
measure $d\sz$ normalized by $\int_{\mathbb{S}^{d-1}}d\sigma{(x)}=1$, where
$\|\cdot\|$ denotes the usual Euclidean norm.

Given a nonzero vector $u$, the
reflection $\sigma_u$ with respect to the hyperplane perpendicular to $u$ is defined
by $x\sigma_u:=x-\frac{2\langle x,u\rangle}{\langle u,u\rangle}u,x\in \mathbb{R}^d$, where $\langle\cdot,\cdot\rangle$
denotes the usual Eulidean inner product. A root system $\mathcal{R}$ is a finite
subset of nonzero vectors in $\mathbb{R}^d$ such that $u,v\in \mathcal{R}$ implies
$u\sigma_v\in \mathcal{R}$. For a fixed $u_0\in\mathbb{R}^d$ such that $\langle u,u_0\rangle\neq0$ for
all $u\in \mathcal{R}$, the set of positive roots $\mathcal{R}_+$ with respect to $u_0$ is
defined by $\mathcal{R}_+=\{u\in \mathcal{R}:\langle u,u_0\rangle>0\}$ and $\mathcal{R}=\mathcal{R}_+\cup(-\mathcal{R}_+)$.
A finite reflection group G with root system $\mathcal{R}$ is
a subgroup of orthogonal group $O(d)$ generated by $\{\sigma_u:u\in \mathcal{R}\}.$

Now we define a real function on $\mathcal{R}_+$, which is called multiplicity function,
written as $\kappa_v:v\mapsto\kappa_v$ of $\mathcal{R}_+\mapsto \mathbb{R}$ satisfying the property
that $\kappa_u=\kappa_v$ whenever $\sigma_u$ is conjugate to $\sigma_v$ in $G$,
that is, there is a $w$ in the reflection group $G$ generated by $\mathcal{R}_+$, such that $\sigma_uw=\sigma_v$.
From the definition of multiplicity function, we can see that $\kappa_v$ is a $G$-invariant
function.

In the following, we will consider the Dunkl weight function
$$h_\kappa(x)=\prod\limits_{v\in \mathcal{R}_+}|\langle x,v\rangle|^{\kappa_v},x\in\mathbb{R}^d,\ \ \kappa_v\ge0,$$
the function $h_\kappa$ is a homogeneous function of degree $\gamma_\kappa=\sum_{v\in \mathcal{R}_+}\kappa_v$
and invariant under the group $G$.
For simplicity of notation, we denote by
$$h_\kappa(x):=\prod\limits_{i=1}^{\#\mathcal{R}_+}|\langle x,v_j\rangle|^{\kappa_j}:=\prod\limits_{i=1}^{m}|\langle x,v_j\rangle|^{\kappa_j},$$
where $\# E$ is the number of the elements in $E$.
The simplest example of $h_\kappa$ is
$$h_\kappa(x)=\prod\limits_{i=1}^d|x_i|^{\kappa_i}\ \  \kappa_i\geq0,$$
corresponding to the group $G=\mathbf{Z}_2^d$.

Let $1\le p\le\infty,$ we denote by $L_p(h_\kappa^2)$ the usual weighted
Lebesgue space on $\ss$ with the finite norm
$$\|f\|_{p,\kappa}:=\bigg(\frac{1}{a_d^\kappa}\int_{\ss}|f(x)|^p h_\kappa^2(x)d\sigma(x)\bigg)^{1/p}<\infty,$$
where $a_d^\kappa=\int_{\ss}h_\kappa^2(x)d\sigma(x)$ is the normalization constant.
For $p=\infty$, we assume that $L_\infty$ is replaced by $C(\ss)$, the space of continuous function
on $\ss$ with the usual norm $\|\cdot\|_\infty$.
It is worthwhile to point out that
the Dunkl weight plays an important role in the theory of multivariate
orthogonal polynomials and will be of great use for the proofs of our results.

Let $K$ be a compact subset of a Banach space $X$. The $n$th entropy
number $e_n(K,X)$ is defined as the infimum of all positive
$\varepsilon$
 such that there exist $x_1,\dots, x_{2^{n}}$ in $X$ satisfying
 $ K\subset\bigcup_{k=1}^{2^{n}}(x_k+\varepsilon B_X),$ where $B_X$ is the unit ball of $X$, that is,
 $$e_n(K,X)=\inf\{\varepsilon>0: K\subset\bigcup_{k=1}^{2^{n}}(x_k+\varepsilon B_X),\  x_1,\dots, x_{2^{n}}\in X\}.$$
Let $T\in L(X,Y)$ be a bounded linear operator between the Banach
spaces $X$ and $Y$. The entropy number $e_n(T)$ is defined as
$$e_n(T):=e_n(T:X\mapsto Y)=e_n(T(B_X),Y).$$
Entropy number plays important roles in many related fields including the function space theory(\cite{ET}), $m$-term approximation(\cite{HW0,Te}),
and so on. In recent years, this classical approximation characterization draws more application in information-based complexity and tractalility problem (\cite{H,KMU,MUV}), signal and image processing (\cite{CDGO, Do}), learning theory (\cite{CS,KT0}).
In this paper, we shall determine asymptotic orders of entropy numbers of
Sobolev classes $BW_p^r(h_\kappa^2)$ in
$L_q(h_\kappa^2)$ for all $1\le p, q\le \infty$ (see the definitions of $W_p^r(h_\kappa^2)$ in Section 2).
In the unweighted case, the exact orders of the entropy numbers of Sobolev
classed $BW^r_p$ on the sphere in $L_q$ were obtained by Kushpel and Tozoni (\cite{KT})
for $1 < p,q < \infty$ and H. Wang, K. Wang and J. Wang (\cite{WWW}) for the remaining
case (when $p$ and/or $q$ is equal to $1$ or $\infty$). In the special case $G=\mathbf{Z}_2^d$ , the Kolmogorov, linear, and Gelfand widths of the
weighted Sobolev classes on the sphere in weighted $L_q$ space were obtained by
Huang and Wang (see \cite{HW}). Our main result  can be formulated as follows:

\begin{thm} Let $r>(d-1)(\frac{1}{p}-\frac{1}{q})_{+}(2\gamma_\kappa+1)$,
$1\leq p,q\leq\infty$, then we have
\begin{equation*}e_n\big(BW_{p}^r(h_\kappa^2),\
L_q(h_\kappa^2)\big) \asymp n^{-\frac{r}{d-1}},
\end{equation*}
where  $(a)_+:=\max\{a,0\}$, $A(n)\asymp B(n) $ means that $A(n)\ll
B(n)$ and $A(n)\gg B(n)$, and $A(n)\ll B(n)$ means that there exists
a positive constant $c$ independent of $n$ such that $A(n)\leq
cB(n)$.\end{thm}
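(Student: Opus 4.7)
The plan is to prove matching upper and lower bounds $e_n\asymp n^{-r/(d-1)}$ by a dyadic block argument built on the $h$--harmonic Littlewood--Paley theory. For $f\in BW_p^r(h_\kappa^2)$, decompose $f=\sum_{j\ge 0}f_j$ with $f_j=V_{2^{j+1}}f-V_{2^j}f$, where $V_N$ is a de la Vall\'ee Poussin type operator whose range is the space $\Pi_{2N}^\kappa$ of $h$--polynomials of degree $\le 2N$. The three weighted tools I will use, all available in the Dunkl setting from the Dai--Xu theory on $L_p(h_\kappa^2)$, are: a Jackson-type estimate $E_{2^j}(f)_{p,\kappa}\ll 2^{-jr}\|f\|_{W_p^r(h_\kappa^2)}$; a weighted Nikolskii inequality on $\Pi_N^\kappa$ of the form $\|g\|_{q,\kappa}\ll N^{\sigma_\kappa(p,q)}\|g\|_{p,\kappa}$ with $\sigma_\kappa(p,q)\le (d-1)(2\gamma_\kappa+1)(1/p-1/q)_+$; and positive cubature / Marcinkiewicz--Zygmund inequalities for $h_\kappa^2$, providing for each $j$ an isomorphism $(\Pi_{2^{j+1}}^\kappa,\|\cdot\|_{q,\kappa})\simeq \ell_q^{N_j}$ with $N_j\asymp 2^{j(d-1)}$ (with weighted sampling coefficients). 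The smoothness hypothesis $r>(d-1)(2\gamma_\kappa+1)(1/p-1/q)_+$ is precisely what makes $2^{j\sigma_\kappa(p,q)-jr}$ a convergent geometric factor, which is why it appears in the theorem.

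For the upper bound, Jackson gives $\|f_j\|_{p,\kappa}\ll 2^{-jr}$, and the MZ isomorphism identifies the entropy of the $j$-th block in $L_q(h_\kappa^2)$ with $e_{n_j}(B_p^{N_j},\ell_q^{N_j})$, which is supplied by Sch\"utt's theorem. I will then distribute a total budget $n=\sum_j n_j$ in the standard balanced way: take $n_j\asymp n\,2^{-j\eta}$ with $\eta>0$ chosen so that every term contributes the same order $n^{-r/(d-1)}$, covering the tail blocks either trivially or via the Nikolskii step. Summing the $j$-wise contributions under the assumed smoothness then yields $e_n(BW_p^r(h_\kappa^2),L_q(h_\kappa^2))\ll n^{-r/(d-1)}$.

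For the lower bound, I will embed a scaled ball of $\ell_p^{N}$ with $N\asymp n$ into $BW_p^r(h_\kappa^2)$ using quasi-disjoint localized bumps. Choose a smooth $\phi$ supported in a small spherical cap positioned away from the singular hyperplanes $\langle x,v\rangle=0$ of $h_\kappa$, then form $N_j\asymp 2^{j(d-1)}$ well-separated $SO(d)$-translates $\phi_k$ of radius $\asymp 2^{-j}$ whose supports avoid those hyperplanes. Combinations $\sum a_k\phi_k$, normalized by the correct $2^{-j(r+(d-1)/p)}$ factor, lie in $BW_p^r(h_\kappa^2)$, and because $h_\kappa^2$ is comparable to a positive constant on each cap, quasi-disjointness gives $\|\sum a_k\phi_k\|_{q,\kappa}\asymp 2^{-j(d-1)/q-jr}\|a\|_{\ell_q^{N_j}}$. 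Applying the lower half of Sch\"utt's theorem with $N_j\asymp n$ produces the matching bound $e_n\gg n^{-r/(d-1)}$.

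The main obstacle is implementing these weighted tools cleanly despite the Dunkl singularities of $h_\kappa^2$. The MZ isomorphism must be the doubling-weight Marcinkiewicz--Zygmund construction of Dai--Xu adapted to the reflection group $G$, and the bump construction for the lower bound must be placed at $G$-generic points so that the local behaviour of $h_\kappa^2$ is trivialised on each cap; only then do the embeddings $\ell_p^{N_j}\hookrightarrow L_p(h_\kappa^2)$ and $\ell_q^{N_j}\hookrightarrow L_q(h_\kappa^2)$ remain isomorphic with constants independent of $j$. Once these weighted analogues of the ingredients used in the unweighted case \cite{KT,WWW} are established, the dyadic-block scheme above closes the two-sided estimate.
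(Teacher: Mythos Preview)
Your overall scaffolding (dyadic blocks, Jackson, MZ/cubature discretization, Sch\"utt for the finite-dimensional pieces, bump-function lower bound) matches the paper's, but there are two genuine gaps where your sketch does not go through as written.

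\textbf{Upper bound: the weighted discretization does \emph{not} reduce to unweighted $\ell_p$.} The cubature/MZ machinery for the doubling weight $h_\kappa^2$ gives
\[
\|g\|_{p,\kappa}\asymp\Big(\sum_{k\in\Lambda_s^d}\lambda_{s,k}|g(\omega_{s,k})|^p\Big)^{1/p},\qquad g\in\Pi_{2^{s+1}}^d,
\]
with weights $\lambda_{s,k}\asymp h_\kappa^2(c(\omega_{s,k},2^{-s}))$ that are \emph{not} uniformly comparable to $(\#\Lambda_s^d)^{-1}$: they degenerate near the mirrors $\langle x,v\rangle=0$. Hence the sampling map is an isometry onto $\ell_{p,w}^{\#\Lambda_s^d}$ and $\ell_{q,w}^{\#\Lambda_s^d}$ with the \emph{same} weight vector $w=(\lambda_{s,k})$, and what you must estimate is $e_{n_s}(B\ell_{p,w}^{\#\Lambda_s^d},\ell_{q,w}^{\#\Lambda_s^d})$, not $e_{n_s}(B\ell_p^{N_s},\ell_q^{N_s})$. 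For $p\ne q$ these do not coincide, and Sch\"utt's theorem does not apply directly. The paper's cure is a second dyadic layer: one first proves the moment bound $\sum_{\xi\in\Lambda}\lambda_\xi^{-\beta}\ll n^{(d-1)(1+\beta)}$ for some $\beta\in(0,\tfrac{1}{2\gamma_\kappa})$ (Lemma~3.1, proved via a doubling-weight polynomial approximation and the generalized H\"older inequality), and then uses it to split $e_{n_s}(B\ell_{p,w}^m,\ell_{q,w}^m)$ into a sum $\sum_k (\tfrac{m}{2^{k-1}})^{(1/p-1/q)/\beta} e_{n_{s,k}}(B\ell_p^{m_k},\ell_q^{m_k})$ (Lemma~3.6). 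Only after this reduction can Sch\"utt be applied, and the factor $\tfrac{1}{\beta}+1>2\gamma_\kappa+1$ is exactly the source of the smoothness threshold $r>(d-1)(\tfrac1p-\tfrac1q)_+(2\gamma_\kappa+1)$; it does not come from a Nikolskii inequality as you suggest. (A pure Nikolskii step $B(L_p\cap\Pi_N)\subset CN^{\sigma}B(L_q\cap\Pi_N)$ followed by $e_n(B\ell_q^{N},\ell_q^{N})$ loses a power of $n$ and does not yield the sharp rate.)

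\textbf{Lower bound: the $h$-Laplacian does not preserve supports.} Your bump argument assumes that $(-\Delta_{h,0})^{r/2}\phi_k$ is supported in the cap $c(x_k,2^{-j})$. That is false even for integer powers: the Dunkl operators contain the difference quotients $\frac{f(x)-f(x\sigma_v)}{\langle x,v\rangle}$, so $\supp\,\Delta_{h,0}\phi_k\subset\bigcup_{\rho\in G}\rho\big(c(x_k,2^{-j})\big)$, the full $G$-orbit of the cap. Consequently the functions $(-\Delta_{h,0})^{v}\phi_k$ are \emph{not} disjointly supported, and the $\ell_p$-isomorphism for the Sobolev norm does not follow from cap disjointness alone. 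The paper (Lemma~3.9) handles this by (i) placing the caps in $\mathbb{S}^{d-1}\setminus\bigcup_j E_j$ away from the mirrors so that $h_\kappa^2\asymp 1$ there, (ii) showing that for each $x$ at most $\#G$ of the reflected caps overlap, hence $\|(-\Delta_{h,0})^v f_a\|_{p,\kappa}\ll l^{2v-(d-1)/p}\|a\|_{\ell_p^N}$ with an extra $(\#G)^{1-1/p}$ factor, and (iii) passing to fractional $r$ via a Kolmogorov-type inequality. Your sketch needs these three ingredients; simply choosing $G$-generic centers does not suffice.
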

In order to prove the main result, we used the discretization method, which
is based on the reduction of the calculation of widths of classes of functions
to the computation of widths of finite dimensional sets. However, we need to
overcome much difficulty, the methods and computations are more technical and complicated. In fact, in the proof of upper estimates, a key lemma (see Lemma 3.1 in Section 3) is important during the process of discretization, while its proof differs from special case for group $G=\mathbf{Z}_2^d$. In the general group case we need to use the properties of doubling weight and
generalized H\"{o}lder's inequality to get it. While in the proof of lower estimates, some support set property does not hold for $h$-spherical Laplace-Beltrami operator, we need to apply the property of general reflection group to solve the problem.

Similarly, we can also consider entropy numbers of weighted Sobolev space $W_p^r(\omega^B_{\kappa,\mu})$
(see the definitions of $W_p^r(\omega^B_{\kappa,\mu})$ in Section 5)
on the unit ball $\bd=\{x\in \mathbb{R}^d:\|x\|\le1\}$, in which
the weight function takes the form
$$\omega^B_{\kappa,\mu}(x)=h_\kappa^2(x)(1-\|x\|^2 )^{\mu-1/2},\ x\in\mathbb{B}^d,$$
where $h_\kappa$ is a reflection invariant weight function on $\mathbb{R}^d$ and $\mu>$0.
The case $h_\kappa=1$ corresponds to the classical weight function
$w_{\mu}(x)=(1-\|x\|^2 )^{\mu-1/2}$. We have the following result:
\begin{thm} Let $r>d(\frac{1}{p}-\frac{1}{q})_{+}(2\gamma_\kappa+1)$,
$1\leq p,q\leq\infty$, then we have
\begin{equation*}e_n\big(BW_{p}^r(\omega^B_{\kappa,\mu}),\
L_q(\omega^B_{\kappa,\mu}) \asymp n^{-\frac{r}{d}}.
\end{equation*}
\end{thm}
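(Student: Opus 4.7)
The natural strategy is to reduce Theorem~1.2 to Theorem~1.1 via the classical correspondence between weighted analysis on $\bd$ and weighted analysis on $\mathbb{S}^{d}$. Given $f:\bd\to\RR$, lift it to $F:\mathbb{S}^{d}\to\RR$ by setting $F(y_1,\dots,y_{d+1}):=f(y_1,\dots,y_d)$, so that $F$ is automatically even in $y_{d+1}$. Under this bijection between functions on $\bd$ and even-in-$y_{d+1}$ functions on $\mathbb{S}^{d}$, the ball weight $\omega^B_{\kappa,\mu}(x)=h_\kappa^2(x)(1-\|x\|^2)^{\mu-1/2}$ pulls back to $\wt h_\kappa^2(y)=h_\kappa^2(y_1,\dots,y_d)|y_{d+1}|^{2\mu}$, which is precisely the Dunkl weight on $\mathbb{S}^{d}$ associated to the enlarged reflection group $G\times\mathbb{Z}_2$ (the new $\mathbb{Z}_2$ factor reflecting the last coordinate with multiplicity $\mu$). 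A standard change of variables yields $\|f\|_{L_p(\omega^B_{\kappa,\mu})}\asymp\|F\|_{L_p(\wt h_\kappa^2)}$ for every $1\le p\le\infty$, and the ball differential operator used in Section~5 to define $W_p^r(\omega^B_{\kappa,\mu})$ pulls back, up to equivalent norms, to the $h$-spherical Laplace--Beltrami operator on $\mathbb{S}^{d}$ for $\wt h_\kappa^2$, restricted to functions even in $y_{d+1}$.

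Consequently $BW_p^r(\omega^B_{\kappa,\mu})$ is norm-equivalent to the closed subspace of $BW_p^r(\wt h_\kappa^2)$ consisting of functions even in $y_{d+1}$. For the upper bound, any $\vz$-net for $BW_p^r(\wt h_\kappa^2)$ in $L_q(\wt h_\kappa^2)$ produces an $\vz$-net for this subspace after symmetrizing each center under $y_{d+1}\mapsto -y_{d+1}$, which is a norm-one projection. Applying Theorem~1.1 on the sphere $\mathbb{S}^{d}$, whose intrinsic dimension is $d=(d+1)-1$, then yields the upper estimate $e_n\ll n^{-r/d}$ under the smoothness assumption of Theorem~1.2.

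The lower bound is where the real work lies, since the inclusion of even functions into $BW_p^r(\wt h_\kappa^2)$ goes the wrong way for a direct appeal to Theorem~1.1. Instead one repeats intrinsically on $\bd$ the construction carried out in Section~3 for the sphere: pick a maximal $\dz$-separated lattice of $O(\dz^{-d})$ well-distributed points in $\bd$, build localized polynomial bumps around each lattice point using a reproducing kernel of the orthogonal polynomial space on $\bd$ with weight $\omega^B_{\kappa,\mu}$ and with controlled $BW_p^r$-norm, and combine them via the same Riesz-product plus Varshamov--Gilbert volumetric argument used for the sphere to realize $e_n\gg n^{-r/d}$. The main obstacle is this discretization step: one needs ball-weighted analogues of the key Lemma~3.1 and of the Bernstein, Nikolskii, and Marcinkiewicz--Zygmund type inequalities used on the sphere. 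These transfer because $\omega^B_{\kappa,\mu}$ is again a doubling weight on $\bd$ and the generalized H\"older inequality continues to hold, but every such step must be checked carefully in the ball setting, and this verification constitutes the core of the proof.
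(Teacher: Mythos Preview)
Your upper bound via lifting $f\mapsto F$ to $\mathbb{S}^{d}$ with the enlarged Dunkl weight $\wt h_\kappa^2(y)=h_\kappa^2(y_1,\dots,y_d)|y_{d+1}|^{2\mu}$ and then symmetrizing an $\varepsilon$-net is correct and is exactly the paper's approach in Section~5. Your observation that the lower bound cannot be read off Theorem~1.1 as a black box (because even functions form a \emph{subspace} of $BW_p^r(\wt h_\kappa^2)$) is also right.

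Where you diverge from the paper is in the remedy. You propose to rebuild the whole discretization apparatus intrinsically on $\bd$, and in doing so you conflate several pieces of machinery: Lemma~3.1 and the Marcinkiewicz--Zygmund inequalities are \emph{upper-bound} tools (they feed into Lemma~3.5) and play no role in the lower bound; the paper's lower bound (Lemma~3.9) uses compactly supported $C^\infty$ bumps, not polynomial reproducing-kernel bumps; and the reduction is to $e_n(B\ell_p^N,\ell_q^N)$, not a Riesz-product or Varshamov--Gilbert argument. The paper instead stays on $\mathbb{S}^{d}$ and notes that the construction of Lemma~3.9, carried out for the weight $\wt h_\kappa^2$, already yields even functions after a trivial tweak: the new hyperplane $\{y_{d+1}=0\}$ is one of the $\wt E_j$, so the bump centers $x_i$ are automatically placed a fixed distance from it; each $\varphi_i$ and its reflection under $y_{d+1}\mapsto-y_{d+1}$ then have disjoint supports, and replacing $\varphi_i$ by the symmetrized sum $\varphi_i(y)+\varphi_i(y',-y_{d+1})$ produces even bumps obeying the same estimates \eqref{3.18}--\eqref{3.21} up to constants. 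This immediately gives $e_n\gg n^{-r/d}$ for the even subspace, hence for $BW_p^r(\omega^B_{\kappa,\mu})$, with no new ball-specific ingredients required. Your intrinsic-ball route could be made to work, but it is substantially longer and the sketch as written does not make clear how you would control the $W_p^r$-norm of polynomial (hence non-compactly-supported) bumps.
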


Moreover, there is also
a close relation between the unit ball and the simplex
$\mathbb{T}^d=\{x\in\mathbb{R}^d:x_1\ge0,\dots,x_d\ge0,1-|x|\ge0\},$ where
$|x|=x_1+\cdots+x_d,$ which allows us to further extend the results to the weighted functions of Soblev space
$W_{p}^r(\omega_{\kappa,\mu}^{T})$ on $\mathbb{T}^d$ (see the definitions of $W_{p}^r(\omega_{\kappa,\mu}^{T})$ in Section 6), in which the weight functions take the form
$$\omega_{\kappa,\mu}^{T}(x)=h_k^2(\sqrt{x_1},\dots,\sqrt{x_d})(1-|x|)^{\mu-1/2}/\sqrt{x_1\cdots x_d},$$
where $\mu\ge1/2$ and $h_\kappa$ is a reflection invariant weight function
defined on $\mathbb{R}^d$ and $h_\kappa$ is even in each of its variables.
The case $h_\kappa(x)=\prod\limits_{i=1}^d|x_i|^{2\kappa_i}$ gives the classical weight
function on the simplex.
\begin{thm} Let $r>d(\frac{1}{p}-\frac{1}{q})_{+}(2\gamma_\kappa+1)$,
$1\leq p,q\leq\infty$, then we have
\begin{equation*}e_n\big(BW_{p}^r(\omega_{\kappa,\mu}^{T}),\
L_q(\omega_{\kappa,\mu}^{T}) \asymp n^{-\frac{r}{d}}.
\end{equation*}
\end{thm}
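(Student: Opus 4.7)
The plan is to reduce Theorem~1.3 to Theorem~1.2 via the classical quadratic change of variables $\psi:\bd\to\mathbb{T}^d$ defined by $\psi(y_1,\dots,y_d):=(y_1^2,\dots,y_d^2)$, which intertwines analysis on the ball with analysis on the simplex at the level of functions that are even in each coordinate.

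The first step is to verify that $T:f\mapsto f\circ\psi$ is, up to a fixed normalizing constant, an isometry from $L_q(\omega_{\kappa,\mu}^T)$ onto the closed subspace $L_q^{\mathrm{sym}}(\omega^B_{\kappa,\mu})$ of $L_q(\omega^B_{\kappa,\mu})$ consisting of functions even in each variable. Substituting $x_i=y_i^2$ in the integral defining the $L_q(\omega_{\kappa,\mu}^T)$-norm produces a Jacobian $2^d|y_1\cdots y_d|$ that exactly cancels the factor $\sqrt{x_1\cdots x_d}$ in the denominator of $\omega_{\kappa,\mu}^T$; since $h_\kappa$ is even in each variable by assumption, the resulting integral over the positive orthant of $\bd$ extends to all of $\bd$ and reproduces the weighted $L_q$ norm for $\omega^B_{\kappa,\mu}$. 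The case $q=\infty$ is immediate as $\psi$ is onto.

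The second step is to transfer the smoothness hypothesis. The second-order operator defining $W_p^r(\omega_{\kappa,\mu}^T)$ is the standard simplex operator from the theory of orthogonal polynomials on $\mathbb{T}^d$; by the ball--simplex correspondence of Y.~Xu it is conjugated by $\psi$ to the ball operator used in Theorem~1.2, and the orthogonal polynomial expansion on $\mathbb{T}^d$ lifts bijectively to the sign-symmetric part of the orthogonal polynomial expansion on $\bd$. Hence $T$ is an isometric isomorphism from $W_p^r(\omega_{\kappa,\mu}^T)$ onto $W_p^r(\omega^B_{\kappa,\mu})\cap L_q^{\mathrm{sym}}(\omega^B_{\kappa,\mu})$. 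With this correspondence the upper bound is immediate: since $T(BW_p^r(\omega_{\kappa,\mu}^T))\subset BW_p^r(\omega^B_{\kappa,\mu})$, any cover of the right-hand side by $2^n$ balls of radius $\ll n^{-r/d}$ in $L_q(\omega^B_{\kappa,\mu})$ furnished by Theorem~1.2 restricts to a cover of the same radius on the subspace. For the lower bound I would use the symmetrization $Af(y)=2^{-d}\sum_{\varepsilon\in\{\pm1\}^d}f(\varepsilon_1 y_1,\dots,\varepsilon_d y_d)$: since $\omega^B_{\kappa,\mu}$ and the ball Sobolev operator are invariant under sign flips of the coordinates, $A$ is a contraction simultaneously on $L_q(\omega^B_{\kappa,\mu})$ and on $W_p^r(\omega^B_{\kappa,\mu})$, projecting onto $L_q^{\mathrm{sym}}$. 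Applied to an extremal family for Theorem~1.2 whose members can be placed (after translation and scaling) with supports in the positive orthant of $\bd$, $A$ produces a family of disjointly supported even functions whose $L_q$-norms are only a constant factor smaller; pushing them forward via $T^{-1}$ certifies $e_n\gg n^{-r/d}$.

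The main obstacle is the compatibility of the Sobolev norms in the second step. The simplex operator contains cross terms that arise naturally from differentiating the quadratic substitution, and one must match them against the ball operator on even functions. The cleanest route is to verify the intertwining relation on the eigenbases of orthogonal polynomials for the two weights, where the identification of eigenvalues is explicit and classical, so that the norm equivalence on the Sobolev scale is reduced to a statement about the sign-symmetric part of the ball expansion.
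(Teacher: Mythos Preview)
Your proposal is correct and follows the same route as the paper: both reduce Theorem~1.3 to Theorem~1.2 via the quadratic substitution $\psi(y)=(y_1^2,\dots,y_d^2)$ and the associated one-to-one correspondence between orthogonal polynomials on $\mathbb{T}^d$ and sign-even orthogonal polynomials on $\bd$, together with the intertwining $D_{\kappa,\mu}^T\circ T^{-1}=T^{-1}\circ D_{\kappa,\mu}^B$ on even functions. The paper itself gives no details beyond stating this correspondence; your write-up actually supplies the missing pieces, in particular the symmetrization projector $A$ for the lower bound, which is a clean way to pass from the extremal family on $\bd$ (whose bump functions, as in Lemma~3.9, can indeed be placed with supports in the positive orthant away from the coordinate hyperplanes) to a family of even functions with comparable norms.
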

This paper is organized as follows.  Section 2 is devoted to giving
the preliminary knowledge about $h$-harmonic analysis and weighted polynomial
inequalities on the sphere.
In Section 3, we  obtain some lemmas related to discretization of the problem of
estimates of entropy numbers.  Finally, we prove Theorems 1.1 in Section 4.
In Section 5, we deduce the result for entropy numbers  on the unit ball from those on the sphere.
Finally, in Section 6, we deduce the result for entropy numbers on the
simplex from those on the ball.

\section{$h$-harmonic analysis and weighted polynomial inequalities on the sphere}

In the following, we consider the weighted $L_p$ best approximation with respect to the
measure $h_\kappa^2d\sigma$ on $\ss$, the theory of $h$-harmonic is necessary. As an
extension of spherical harmonics, the usual Laplace operator is replaced by a sum of square Dunkl
operators.

The Dunkl operators are defined by
$$\mathcal{D}_i f(x)=\partial_i f(x)+\sum_{v\in \mathcal{R}_+}\kappa_v\frac{f(x)-f(x\sigma_v)}{\langle x,v\rangle}
\langle v,e_i\rangle, i=1,\ldots,d,$$
where $e_1=(1,0,\ldots,0),\cdots,e_d=(0,\ldots,0,1).$ From the definition of $\mathcal{D}_i$ we can see that
they are the first order differential-difference operators, for more properties of $\mathcal{D}_i$
refer to \cite{DX1,DX2}.

The analogue of the Laplace operator, which is called $h$-Laplacian, is defined by
$$\Delta_h=\mathcal{D}_1^2+\cdots\mathcal{D}_d^2.$$

We denote by $\Pi_n^d$ the space of polynomials of degree at most $n$ on $\ss$, i.e.
the polynomials of degree at most $n$ restricted on $\ss$,
$\mathcal{P}_n^d$ the subspace of homogeneous polynomials
of degree $n$ in $d$ variables. An $h$-harmonic polynomial $Y$ of degree $n$
is a homogeneous polynomial $Y\in\mathcal{P}_n^d$ such that $\Delta_h Y=0.$

A spherical $h$-harmonic $Y_n$ of degree $n$ is a homogeneous polynomial
of degree $n$ restricted on $\ss$ and $\Delta_h Y_n=0$. We denote by $\mathcal{H}_n^d(h_\kappa^2)$
the space of all spherical $h$-harmonics of degree $n$ on $\ss$.
Furthermore, spherical $h$-harmonic polynomials of different degree
are orthogonal with respect to the inner product
$$\langle f,g\rangle_\kappa:=\frac{1}{a_d^\kappa}\int_{\ss} f(x)g(x)h_\kappa^2(x)d\sigma(x),$$
that is, for $n\neq m$, $\langle Y_n,Y_m\rangle_\kappa=0, \ Y_n\in\mathcal{H}_n^d(h_\kappa^2),\ Y_m\in\mathcal{H}_m^d(h_\kappa^2).$
We can follow from the standard Hilbert space theory
that $$L_2(h_\kappa^2)=\bigoplus_{n=0}^{\infty}\mathcal{H}_n^d(h_\kappa^2),\ \ \ \Pi_n^d=\bigoplus_{k=0}^{n}\mathcal{H}_k^d(h_\kappa^2).$$

It is well known that $\dim \Pi_n^d\asymp n^{d-1},\  \dim \mathcal{H}_n^d\asymp n^{d-2}.$

In terms of the polar coordinates $y=ry'$, $r=\|y\|$, the $h$-Laplacian operator takes
the form \cite{Xu4}
$$\Delta_h=\frac{\partial^2}{\partial r^2}+
\frac{2\lambda_\kappa+1}{r}\frac{\partial}{\partial r}+\frac{1}{r^2}\Delta_{h,0},$$
where $\lambda_\kappa:=\frac{d-2}{2}+\gamma_\kappa$, $\Delta_{h,0}$ is the Laplace-Beltrami operator on the sphere.

It is analogous with the usual harmonics that the spherical $h$-harmonics are
eigenfunctions of Laplace-Beltrami operator $\Delta_{h,0}$ on the sphere, that
is $$\Delta_{h,0}Y_n(x)=-(n(n+2\lambda_\kappa))Y_n(x),\ \ x\in\ss,\ \ Y_n\in\mathcal{H}_n^d(h\kappa^2),$$

Denote by $proj_n^\kappa:L_2(h_\kappa^2)\longrightarrow\mathcal{H}_n^d(h_\kappa^2)$
the orthogonal projection from $L_2(h_\kappa^2)$ onto $\mathcal{H}_n^d(h_\kappa^2)$,
which can be expressed as
$$proj_n^\kappa(f)(x)=\frac{1}{a_d^\kappa}\int_{\ss} f(y)P_n(h_\kappa^2;x,y)h_\kappa^2(y)d\sigma(y),$$
where $P_n(h_\kappa^2;x,y)$ is the reproducing kernel of $\mathcal{H}_n^d(h_\kappa^2).$
Moreover, we get that for $f\in L_2(h_\kappa^2),$
$f(x)=\sum_{n=0}^\infty proj_n^{\kappa}f(x)$
in $L_2(h_\kappa^2)$ norm.

We note that the kernel $P_n(h_\kappa^2;x,y)$ has a compact formula in terms
of the intertwining operator which acts between ordinary harmonics and $h$-harmonics
and encodes essentially information on the action of reflection group. The intertwining operator
$V_\kappa$ is a linear operator on the space of algebraic polynomials on $\mathbb{R}^d$
which satisfies
$$\mathcal{D}_iV_\kappa=V_\kappa\partial_i,\ 1\le i\le d,\ V_\kappa 1=1,\ V_\kappa \mathcal{P}_n\subset\mathcal{P}_n,\ n\in\mathbb{N}_0.$$
One important property of the intertwining operator is that it is positive (see \cite{Ro}),
that is ,$V_\kappa p\ge0$ if $p\ge0.$

For the general reflection group $G$, the explicit formula of
$P_n(h_\kappa^2;x,y)$ is given by (see \cite{Xu2})
$$P_n(h_\kappa^2;x,y)=
\frac{n+\lambda_\kappa}{\lambda_\kappa}V_\kappa[C_n^\lambda(\langle\cdot,y\rangle)](x).$$
In the special case $G=\mathbf{Z}_2^d$, the kernel $P_n(h_\kappa^2;x,y)$ has an explicit formula (see \cite{Du2,Xu1,Xu2})
$$P_n(h_\kappa^2;x,y)=\frac{n+\beta_\kappa}{\beta_\kappa}c_\kappa\int_{[-1,1]^d}
C_n^{\beta_\kappa}(x_1y_1t_1+\cdots+x_dy_dt_d)\prod_{i=1}^d(1+t_i)(1-t_i^2)^{\kappa_i-1}dt,$$
where $\beta_\kappa=\frac{d-2}{2}+|\kappa|,\ |\kappa|=\sum_{i=1}^d\kappa_i,
\ \ c_\kappa=c_{\kappa_1}\cdots c_{\kappa_d},\
c_\lambda=\frac{\Gamma(\lambda+1/2)}{\sqrt{\pi}\Gamma(\lambda)}$
and $C_n^\lambda$ denotes the Gegenbauer polynomial of degree $n$.

Given $r>0,$ we define the fractional order of Laplace-Beltrami operator
$(-\Delta_{h,0})^{\frac{r}{2}}$ on $\ss$ in a distribution sense by
$$(-\Delta_{h,0})^{\frac{r}{2}}f=
\sum_{n=0}^\infty (n(n+2\lambda_\kappa))^{r/2} proj_n^\kappa(f).$$
where $f$ is a distribution on $\ss$. We call $(-\Delta_{h,0})^{\frac{r}{2}}f$ the $r$-th
order of distribution $f$.

For $f\in L_1(h_\kappa^2) $, $r\in \mathbb{R}$, the Fourier series of $(-\Delta_{h,0})^{\frac{r}{2}}f$
can be written as
$$(-\Delta_{h,0})^{\frac{r}{2}}f=\sum_{n=1}^{\infty} (n(n+2\lambda_\kappa))^{r/2}proj_n^\kappa(f).$$.

Let $r>0, \ 1\le p\le\infty$, the Sobolev space $W_p^r(h_\kappa^2,\ss)$ is defined by
\begin{align*}W_p^r(h_\kappa^2):=&W_p^r(h_\kappa^2,\ss):=\Big\{f\in L_p(h_\kappa^2) :
\|f\|_{W_p^r(h_\kappa^2)}<\infty,\
\\&\exists \ g\in L_p(h_\kappa^2), such \ \ that \ \  g=(-\Delta_{h,0})^{\frac{r}{2}}f \Big\},\end{align*}
where $\|f\|_{W_p^r(h_\kappa^2)}:=\|f\|_{p,\kappa}+
\|(-\Delta_{h,0})^{\frac{r}{2}}f\|_{p,\kappa}<\infty$.
While the Sobolev class $BW_p^r(h_\kappa^2)$ is defined to be
the unit ball of $W_p^r(h_\kappa^2)$.

Let $1\le p\le\infty,\ n\in\mathbb{Z}_+$, the best approximation of
$f\in L_p(h_\kappa^2)$ is defined by
$$E_n(f)_{p,\kappa}:=\inf\{\|f-P\|_{p,\kappa}:P\in\Pi_n^d\}.$$

It is known that for $f\in W_p^r(h_\kappa^2),\ 1\le p\le\infty,$
\begin{equation}\label{2.1}E_n(f)_{p,\kappa}\ll n^{-r}\|(-\Delta_{h,0})^{\frac{r}{2}}f\|_{p,\kappa}.\end{equation}

Let $\eta$ be a $C^{\infty}$ function on $[0,\infty)$ satisfying
$\eta(t)=1$ for $0\le t\le 1$ and $\eta(t)=0$ if $t\geq 2$.
Now define a sequence of operator $\eta_n$ for $n\in\mathbb{N}$
by \begin{equation}\label{2.2}\eta_n f(x)=\int_{\ss}f(y)L_{n,\eta}^\kappa (x,y)h_\kappa^2(y)d\sigma(y),\end{equation}
where $L_{n,\eta}^\kappa (x,y)=\sum_{k=0}^\infty \eta(\frac{k}{n})P_k(h_\kappa^2;x,y).$

For $f\in L_{p,h_\kappa^2},\ 1\le p\le\infty,$ the operator $\eta_n$ shares the following properties (see \cite[Proposition 3.7]{Xu3}):

$(1)\ \ \eta_n f\in \Pi_{2n-1}^d, \ and \  \eta_n p=p \ \ for \ p\in\Pi_n^d;$

$(2)\ \ \|\eta_n f\|_{p,\kappa}\ll\|f\|_{p,\kappa},\ \ n\in\mathbb{N};$

$(3)\ \ \|f-\eta_n f\|_{p,\kappa}\ll E_n(f)_{p,\kappa},\ \ n\in\mathbb{N}.$

From the property (1) of $\eta_n$ we can see that for $f\in\Pi_n^d$
\begin{equation}\label{2.3}f(x)=\frac{1}{a_d^\kappa}\int_{\ss} f(y)L_{n,\eta}^\kappa (x,y)h_\kappa^2(y)d\sigma(y).\end{equation}

For $f\in L_p(h_\kappa^2),$ we define
$$A_0(f)=\eta_1(f),\ \ A_s(f)=\eta_{2^s}(f)-\eta_{2^{s-1}}(f),\ \ s\geq 1.$$

Then $A_s(f)\in\Pi_{2^{s+1}}^d$ and $f=\sum_{s=0}^\infty A_s (f)$ in
$L_p(h_\kappa^2)$ norm. Furthermore, it follows from \eqref{2.1} that
\begin{equation}\label{2.4}\|A_s(f)\|_{p,\kappa}\ll 2^{-sr}\|(-\Delta_{h,0})^{\frac{r}{2}}f\|_{p,\kappa}.\end{equation}

Denote by
$d(x,y)=\arccos \langle x,y\rangle$ the geodesic distance between two
points $x$ and $y$ on $\mathbb{S}^{d-1}$,  $c(x,r)$ the spherical
cap centered at $x\in\mathbb{S}^{d-1}$ with radius $r>0$,
i.e., $c(x,r)=\{y\in\mathbb{S}^{d-1}:d(x,y)\leq r\}$.

Given $\varepsilon>0,$ a subset $\Lambda\subset\ss$ is called $\varepsilon$-separable
if$$\min\limits_{x\neq x'\in\Lz}d(x,x')\geq\varepsilon,$$
furthermore, a  maximal $\varepsilon$-separable set $\Lambda$ is an
$\varepsilon$-separable set satisfying
$$\max_{x\in\mathbb{S}^{d-1}}\min_{u\in\Lz}d(x,u)<\varepsilon.$$

A weight function $w$ on $\ss$ is called a doubling weight if
there exists a constant $L>0$ such that for any $x\in\ss$ and $r>0$
\begin{equation}\label{2.5}\int_{c(x,2r)}w(x)d\sigma(x)\le L\int_{c(x,r)}w(x)d\sigma(x),\end{equation}
the least constant $L$ for which \eqref{2.5} holds is called the doubling
constant of $w$ and is denoted by $L_w$.

We write for a doubling weight $w$ and measurable subset $E$ of $\ss$, $$w(E)=\int_{E} w(x)d\sigma(x).$$
For a spherical cap $B=c(x,r)$, interating \eqref{2.5} shows that $w(2^mB)\le L_w^mw(B)=2^{m\log_2 L_w}w(B).$
We will use the symbol $s_w$ to denote a number in $[0,\log_2 L_w]$ such that
$$\sup_{B\subset\ss}\frac{w(2^mB)}{w(B)}\}\le C_{L_w}2^{ms_w},\ \ m=1,2,\cdots,$$
where $C_{L_w}$ is a constant depending only on $L_w$ and the supremum is taken over all spherical
caps $B\subset\ss.$

It is also known that for $x,y\in\ss$ and $n=0,1,\cdots,$
\begin{equation}\label{2.6}w(c(x,\frac{1}{n}))\le C_{L_w}(1+nd(x,y))^{s_w}w(c(y,\frac{1}{n})).\end{equation}

By the definition of doubling weight, it is easily seen that the weight function
$w(x)=h_\kappa^2(x)$ satisfies the doubling condition, and for $x\in\ss$ and $n\in\mathbb{N}$ (see \cite{DX1})
\begin{equation}\label{2.7}w(c(x,\frac{1}{n}))\asymp n^{-(d-1)}\prod_{v\in \mathcal{R}_+}(|\langle x,v\rangle|+1/n)^{2\kappa_v}
=n^{-(d-1)}\prod_{j=1}^m(|\langle x,v_j\rangle|+1/n)^{2\kappa_j}.\end{equation}

For more properties of the doubling weight and weighted polynomial inequalities see \cite{DX1,GT1,GT2}.

The proof of our main results is based on the
following positive  cubature formulae and Marcinkiewz-Zygmund
inequalities (see \cite{BD}, \cite{Dai}, \cite{MNW}, \cite{NPW}).

\noindent{\bf Theorem  A.} Let $w$ be a doubling weight on $\ss$, there
exists a positive constant $\varepsilon$ depending only
on $d$ and $s_w$, such that for any $\delta\in(0,\varepsilon),$ and
any maximal $\frac{\delta}{n}$ separable subset $\Lambda\subset\ss,$
there exists a sequence of positive numbers
$\lambda_\xi\asymp w(c(\xi,\frac{\delta}{n})), \ \xi\in\Lambda,$
for which the following
$$\int_{\mathbb{S}^{d-1}}f(y)w(y)d\sigma(y)=\sum\limits_{\xi\in\Lambda}\lambda_{\omega}f(\xi)$$
holds for $f\in\Pi_{n}^d$.
Moreover, if the above equality is exact for $f\in\Pi_{3n}^d,$ then for $1\le p\le\infty,$ and $f\in\Pi_{n}^d,$
$$\|f\|_{p,\kappa}\asymp\bigg\{\begin{array}{ll}\Big(\sum\limits_{\xi\in \Lambda}
\lambda_\xi|f(\xi)|^p\Big)^\frac{1}{p},\ \ \
 &\mbox{if}\ 1\le p<\infty,\\\max\limits_{\xi\in\Lambda}|f(\xi)|,
 \ \ \ &\mbox{if}\ p=\infty,\end{array}$$
where the constants of equivalence depend only on $d$ and $s_w$.

\section {The main lemmas and proofs}

Let $\kappa=(\kappa_1,\cdots,\kappa_m)\in\mathbb{R}_{+}^m,\ \
v=(v_1,\cdots,v_m)$ with $v_j\in\ss, \ \ 1\le j\le m.$ The weight function $w(x)=\prod_{j=1}^m|\langle x,v_j\rangle|^{2\kappa_j}$
satisfies the doubling condition on $\ss$. Furthermore, we conclude
from Theorem A that the corresponding results hold for $w(x)$. For simplicity
of notation, we use the same signs as stated in Theorem A. We have the following important
lemma:

\begin{lem} Let $\kappa=(\kappa_1,\cdots,\kappa_m)\in\mathbb{R}_{+}^m,\ \
v=(v_1,\cdots,v_m)$ with $v_j\in\ss, \ \ 1\le j\le m,$ and $\Lambda,\lambda_\omega$ be as in Theorem A.
Then for the weight function $w(x)=\prod_{j=1}^m|\langle x,v_j\rangle|^{2\kappa_j},$
there exists a
constant $\beta\in(0,\frac{1}{2|\kappa|}),$ such that $$\sum_{\xi\in\Lambda}\lambda_\xi^{-\beta}\ll n^{(d-1)(1+\beta)}.$$
\end{lem}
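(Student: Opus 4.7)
The plan is to make $\lambda_\xi^{-\beta}$ explicit via the doubling-weight formula \eqref{2.7}, transfer the sum $\sum_{\xi\in\Lambda} \lambda_\xi^{-\beta}$ to a surface integral over $\ss$ by exploiting the $\delta/n$-separability of $\Lambda$, and then bound that integral by the generalized H\"older inequality with weights proportional to the $\kappa_j$'s.

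From $\lambda_\xi \asymp w(c(\xi,\delta/n))$ together with \eqref{2.7} (where $\delta$ is a fixed constant) I have
$$\lambda_\xi^{-\beta} \asymp n^{(d-1)\beta}\prod_{j=1}^{m}(|\langle\xi,v_j\rangle|+1/n)^{-2\kappa_j\beta}.$$
Because $\Lambda$ is $\delta/n$-separable, the spherical caps $c(\xi,\delta/(2n))$, $\xi\in\Lambda$, are pairwise disjoint subsets of $\ss$, each of surface measure $\asymp n^{-(d-1)}$. For $y\in c(\xi,\delta/(2n))$ and $1\le j\le m$, $|\langle y-\xi,v_j\rangle|\le\|y-\xi\|\ll 1/n$, so $|\langle y,v_j\rangle|+1/n\asymp|\langle\xi,v_j\rangle|+1/n$ uniformly in $\xi,y,n,j$. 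Replacing $\xi$ by $y$ inside the cap, averaging, and summing over $\xi$ (using the disjointness) gives
$$\sum_{\xi\in\Lambda}\lambda_\xi^{-\beta}\ll n^{(d-1)(1+\beta)}\int_{\ss}\prod_{j=1}^{m}(|\langle y,v_j\rangle|+1/n)^{-2\kappa_j\beta}\,d\sigma(y).$$

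It then suffices to bound the integral on the right uniformly in $n$. I apply the generalized H\"older inequality with conjugate exponents $p_j=|\kappa|/\kappa_j$ (so $\sum_j 1/p_j=1$):
$$\int_{\ss}\prod_{j=1}^{m}(|\langle y,v_j\rangle|+1/n)^{-2\kappa_j\beta}\,d\sigma(y)\le\prod_{j=1}^{m}\left(\int_{\ss}(|\langle y,v_j\rangle|+1/n)^{-2|\kappa|\beta}\,d\sigma(y)\right)^{\kappa_j/|\kappa|}.$$
Since $v_j\in\ss$, rotation invariance of $d\sigma$ and the substitution $t=\langle y,v_j\rangle$ reduce each factor to a one-dimensional integral of the form $\int_{-1}^{1}(|t|+1/n)^{-2|\kappa|\beta}(1-t^2)^{(d-3)/2}\,dt$, dominated by $\int_{-1}^{1}|t|^{-2|\kappa|\beta}(1-t^2)^{(d-3)/2}\,dt$. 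The latter is finite as soon as $2|\kappa|\beta<1$, so any $\beta\in(0,1/(2|\kappa|))$ yields a uniform-in-$n$ bound and completes the proof.

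The main obstacle is the final step: the integrand is singular along the union of $m$ hyperplanes $\{\langle y,v_j\rangle=0\}$, and the sharp endpoint $1/(2|\kappa|)$ claimed by the lemma is obtained precisely by redistributing the exponent $\beta$ across the $m$ factors with weights $\kappa_j/|\kappa|$ via generalized H\"older, so that each one-variable integral lies exactly at the borderline of integrability.
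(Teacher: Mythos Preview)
Your proof is correct and follows the same overall architecture as the paper's: reduce $\sum_{\xi}\lambda_\xi^{-\beta}$ to $n^{(d-1)(1+\beta)}\int_{\ss}\prod_{j}(|\langle y,v_j\rangle|+1/n)^{-2\kappa_j\beta}\,d\sigma(y)$, then bound the integral by the generalized H\"older inequality with exponents $|\kappa|/\kappa_j$ and use rotation invariance to reduce to a one-dimensional integral that is finite precisely when $2|\kappa|\beta<1$.

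The difference lies in the reduction step. The paper works with the averaged weight $w_n(x)=n^{d-1}w(c(x,1/n))$ and invokes two auxiliary results (Lemmas~3.3 and~3.4) --- a polynomial approximation $g\asymp w_n^{-(\beta+1)}$ and the norm equivalence $\|g\|_{1,w}\asymp\|g\|_{1,w_n}$ --- to justify $\sum_{\xi}\lambda_\xi^{-\beta}\asymp n^{(d-1)(1+\beta)}\int_{\ss}w_n^{-\beta}\,d\sigma$, and only then substitutes the explicit form \eqref{2.7}. You instead start directly from \eqref{2.7} and use the elementary Lipschitz bound $|\langle y-\xi,v_j\rangle|\le\|y-\xi\|\ll 1/n$ on the disjoint caps $c(\xi,\delta/(2n))$ to pass from the sum to the integral. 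This is shorter and bypasses the polynomial-approximation machinery entirely; the price is that you get only $\ll$ rather than the two-sided $\asymp$ the paper obtains at \eqref{3.4}, but only the upper bound is needed for the lemma. Your endgame --- dominating $(|t|+1/n)^{-2|\kappa|\beta}$ by $|t|^{-2|\kappa|\beta}$ --- is also cleaner than the paper's dyadic partition of $[0,1]$ in \eqref{3.8}.
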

\begin{rem}
When the group $G=\mathbf{Z}_2^d$, Huang and Wang (\cite{HW}) used elementary polar coordinate
method to give the above estimate. However,
their method cannot adapt for the general finite refection group. Instead, we use
the properties of doubling weight and generalized H\"{o}lder inequality to get
it.
\end{rem}

In order to prove Lemma 3.1, we need the following two lemmas.
\begin{lem}\cite[Lemma 4.6]{Dai} Suppose that $\alpha$ is a fixed nonnegative number,
$n$ is a positive integer and $f$ is
a nonnegative function on $\ss$ satisfying $$f(x)\le C_1(1+nd(x,y))^{\alpha}f(y) \ \ for\ \ all\ \  x,y\in\ss.$$
Then for any $0 < p <\infty$, there exists a nonnegative spherical polynomial $g\in\Pi_n^d$ such
that
$$C^{-1}f(x)\le g(x)^p\le C f(x) \ \ for \ \ any \ \ x\in\ss,$$
where $C > 0$ depends only on $d, C_1,p \ and \ \alpha$.\end{lem}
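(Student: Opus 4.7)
The plan is to reduce to the case $p=1$ and then construct $g$ as an integral of $f^{1/p}$ against a highly localized, nonnegative polynomial kernel of degree $\le n$. Observe first that the hypothesis is stable under taking $p$-th roots: setting $F:=f^{1/p}$, one obtains $F(x)\le C_1^{1/p}(1+nd(x,y))^{\alpha/p}F(y)$ for all $x,y\in\ss$. Hence it suffices to find a nonnegative $g\in\Pi_n^d$ with $g(x)\asymp F(x)$ on $\ss$, since then $g(x)^p\asymp f(x)$ with constants depending only on $d,C_1,p,\alpha$.

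Next, invoke the existence of a positive, strongly localized polynomial kernel on the sphere: for any prescribed $M$ there is $K_n(x,y)$, a polynomial of degree $\le n$ in $x$ (and in $y$), satisfying
\begin{equation*}
K_n(x,y)\ge 0,\qquad \int_{\ss}K_n(x,y)\,d\sigma(y)\asymp 1,\qquad K_n(x,y)\le c_M\,n^{d-1}(1+nd(x,y))^{-M},
\end{equation*}
together with a matching lower bound $K_n(x,y)\ge c\,n^{d-1}$ on the cap $\{y\in\ss:d(x,y)\le 1/n\}$. One concrete realization is obtained by squaring a de la Vall\'ee Poussin (or Jackson-type) kernel of degree $\lfloor n/2\rfloor$: the squaring enforces nonnegativity and doubles the rate of decay, while the cap mass $\sigma(c(x,1/n))\asymp n^{-(d-1)}$ matches the concentration $n^{d-1}$ to give an approximate identity.

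Now define
\begin{equation*}
g(x):=\int_{\ss} F(y)\,K_n(x,y)\,d\sigma(y),
\end{equation*}
which is automatically a nonnegative element of $\Pi_n^d$. The upper bound $g(x)\le CF(x)$ follows by substituting $F(y)\le C_1^{1/p}(1+nd(x,y))^{\alpha/p}F(x)$ and applying the standard spherical estimate $\int_{\ss}(1+nd(x,y))^{-s}\,d\sigma(y)\asymp n^{-(d-1)}$, valid for $s>d-1$, with $s:=M-\alpha/p$ chosen suitably large. The lower bound $g(x)\ge cF(x)$ follows by restricting the integral to $c(x,1/n)$: there $F(y)\ge C_1^{-1/p}2^{-\alpha/p}F(x)$ and $K_n(x,y)\gtrsim n^{d-1}$, so the contribution is at least $\gtrsim F(x)\cdot n^{d-1}\cdot\sigma(c(x,1/n))\asymp F(x)$. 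Combining the two bounds yields $g\asymp F$ and therefore $g^p\asymp f$.

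The main obstacle is the construction of the positive, strongly localized polynomial kernel $K_n$: this requires sharp pointwise bounds on a Dirichlet-type kernel on $\ss$ together with a Ces\`aro or Vall\'ee Poussin smoothing and the squaring trick to enforce nonnegativity without degrading the decay more than by a factor of $2$ in the rate. Once this kernel is in hand, the remaining arguments (reduction to $p=1$ and the two-sided estimate for the convolution) are routine consequences of the polar-integral estimate on $\ss$ and the doubling-type hypothesis on $F$.
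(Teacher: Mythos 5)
This lemma is not proved in the paper at all: it is stated with the citation to Dai's Lemma 4.6 and used as a black box, so there is no in-paper argument to compare yours against. Your proposal is, in substance, the standard proof of that result from the literature, and it is sound: the reduction to $F=f^{1/p}$ is immediate, and the two-sided estimate for $g(x)=\int_{\ss}F(y)K_n(x,y)\,d\sigma(y)$ follows exactly as you say once a nonnegative, strongly localized kernel in $\Pi_n^d$ is available. Three details deserve care if you write this out. First, the bookkeeping for the squaring trick: the localized kernel $L_m(x,y)=\sum_k\eta(k/m)P_k(x,y)$ has degree up to $2m-1$ in $x$, so $L_m^2$ forces you to start from $m\asymp n/4$; moreover $\int_{\ss}L_m(x,y)^2\,d\sigma(y)=\sum_k\eta(k/m)^2\dim\mathcal{H}_k^d\asymp m^{d-1}$, so the correctly normalized kernel is $K_n(x,y)=c\,m^{-(d-1)}L_m(x,y)^2$, after which your claims $\int K_n(x,\cdot)\,d\sigma\asymp1$ and $K_n(x,y)\ll_M n^{d-1}(1+nd(x,y))^{-M}$ do hold. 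Second, the near-diagonal lower bound $K_n(x,y)\gg n^{d-1}$ is not automatic on the full cap of radius $1/n$; it is obtained from $L_m(x,x)\asymp m^{d-1}$ together with a Bernstein-type gradient bound $\|\nabla_yL_m(x,\cdot)\|_\infty\ll m\cdot m^{d-1}$, and therefore holds on a cap of radius $\delta/n$ for a small fixed $\delta$ — which is all you need, since $\sigma(c(x,\delta/n))\asymp_\delta n^{-(d-1)}$. Third, $f$ is only assumed nonnegative, not measurable (the hypothesis forces two-sided boundedness but not measurability), so to be completely safe one replaces your integral by the discrete sum $\sum_{\xi\in\Lambda}\sigma(R_\xi)F(\xi)K_n(x,\xi)$ over a maximal $c/n$-separated set $\Lambda$ with an associated partition $\{R_\xi\}$ into sets of diameter $\ll 1/n$; the same two estimates go through verbatim, and this is essentially how the cited source proceeds. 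With these adjustments your argument is complete.
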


For $n=1,2,\cdots,$ it is often convenient to work with an approximation $w_n$
of weight function $w$, which is defined by \begin{equation}\label{3.1}w_n(x)=n^{d-1}\int_{c(x,\frac1n)}w(y)d\sigma(y).\end{equation}

\begin{lem}\cite[Corollary 3.4.]{Dai} For $f\in\Pi_n^d$ and $0<p<\infty,$

$$C^{-1}\|f\|_{p,w_n}\le\|f\|_{p,w}\le C\|f\|_{p,w_n},$$
where $C >0$ depends only on $d, L$ and $p$ when $p$ is small.\end{lem}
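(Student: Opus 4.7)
The statement is a standard equivalence of weighted $L_p$ norms on the polynomial space $\Pi_n^d$ between the original doubling weight $w$ and its local smoothing $w_n$. The plan is to discretize both norms against a single maximal $(1/n)$-separated grid and show that each norm is comparable to the same discrete sum.

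First I would collect the basic properties of $w_n$ that follow from the doubling of $w$ and the definition \eqref{3.1}: (a) $w_n(x) \asymp w(c(x,1/n))/\sigma(c(x,1/n))$ by construction; (b) $w_n$ is itself a doubling weight, with doubling constant controlled by that of $w$; and (c) $w_n$ satisfies the global Lipschitz-type estimate $w_n(x) \ll (1+nd(x,y))^{s_w}\, w_n(y)$, which follows by inserting the definition of $w_n$ into \eqref{2.6}. Property (c) is the crucial feature that $w$ itself lacks, since $w$ may vanish on the hyperplanes $\langle x,v_j\rangle = 0$.

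Next I would fix a maximal $(\delta/n)$-separated set $\Lambda \subset \ss$, with $\delta$ small enough depending on the doubling constant of $w$. The caps $c(\xi,\delta/n)$, $\xi \in \Lambda$, then cover $\ss$ with bounded overlap. The main analytic ingredient is a local Nikolskii-type inequality: for $f \in \Pi_n^d$, $0 < p < \infty$, and any doubling weight $W$ on $\ss$,
$$\int_{c(\xi,\delta/n)} |f(x)|^p W(x)\, d\sigma(x) \asymp |f(\xi)|^p\, W\bigl(c(\xi,\delta/n)\bigr).$$
Applying this with $W=w$ and with $W=w_n$ in turn, then summing over $\xi\in\Lambda$, yields
$$\|f\|_{p,w}^p \asymp \sum_{\xi\in\Lambda} |f(\xi)|^p\, w\bigl(c(\xi,\delta/n)\bigr), \qquad \|f\|_{p,w_n}^p \asymp \sum_{\xi\in\Lambda} |f(\xi)|^p\, w_n(\xi)\, \sigma\bigl(c(\xi,\delta/n)\bigr).$$
By property (a) the two right-hand sides are equivalent, and the lemma follows.

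The main obstacle is the local Nikolskii inequality stated above, especially in the range $0<p<1$ where $t\mapsto t^p$ is concave and direct H\"{o}lder-type arguments run the wrong way. My approach would invoke Lemma 3.2 applied to $w_n$: property (c) verifies its hypothesis with $\alpha=s_w$, so for any fixed small $c_0>0$ one obtains a nonnegative polynomial $g\in\Pi_{c_0 n}^d$ with $g(x)^p \asymp w_n(x)$ on $\ss$. Then $fg\in\Pi_{(1+c_0)n}^d$ and $\int|f|^p w_n\, d\sigma \asymp \int |fg|^p\, d\sigma$, which reduces the weighted local equivalence for $w_n$ to an unweighted statement for a polynomial of comparable degree — a classical local comparison handled by standard Bernstein/Remez-type bounds. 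The $W=w$ version is then obtained by replacing $w$ by $w_n$ inside each cap $c(\xi,\delta/n)$ via property (a), up to a bounded multiplicative error that is uniform in $\xi$.
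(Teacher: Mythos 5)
First, note that the paper does not prove this lemma at all: it is imported verbatim as \cite[Corollary 3.4]{Dai}, so there is no in-paper proof to compare against. Your overall strategy --- discretize both $\|f\|_{p,w}$ and $\|f\|_{p,w_n}$ over one maximal $(\delta/n)$-separated set $\Lambda$ and observe that $w(c(\xi,\delta/n))\asymp w_n(c(\xi,\delta/n))$ by doubling and the definition \eqref{3.1} --- is exactly how the result is derived in Dai's paper, and your properties (a)--(c) of $w_n$ are all correct (property (c) is the paper's \eqref{3.2}).

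The genuine gap is your ``main analytic ingredient,'' the per-cap equivalence
$\int_{c(\xi,\delta/n)}|f|^p W\,d\sigma \asymp |f(\xi)|^p\,W(c(\xi,\delta/n))$.
This is false as stated: if $f(\xi)=0$ the right-hand side vanishes while the left need not. Even after replacing $|f(\xi)|$ by $\max_{c(\xi,\delta/n)}|f|$, only the trivial direction $\int_{c}|f|^pW\le(\max_c|f|)^pW(c)$ holds cap by cap; the reverse fails with a constant independent of $n$. For instance, with $f(x)=(\langle x,e\rangle-\langle \xi,e\rangle)^n$ and $W\equiv 1$, one has $\frac{1}{\sigma(c)}\int_{c(\xi,\delta/n)}|f|^p\,d\sigma\asymp (np)^{-1}\bigl(\max_{c(\xi,\delta/n)}|f|\bigr)^p$, so the per-cap ratio degenerates like $n$. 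The correct ingredient is the Marcinkiewicz--Zygmund inequality for doubling weights,
$\|f\|_{p,w}^p\asymp\sum_{\xi\in\Lambda}\bigl(\max_{c(\xi,\delta/n)}|f|\bigr)^p w(c(\xi,\delta/n))$,
which is only true after summation over $\Lambda$ and cannot be obtained by adding up cap-by-cap equivalences. Moreover, you need it for all $0<p<\infty$, whereas Theorem A in this paper covers only $1\le p\le\infty$; the $0<p<1$ case is precisely Dai's Theorem 3.3, from which his Corollary 3.4 (the present lemma) is deduced. So once the false local step is replaced by the summed MZ inequality, your argument collapses into the original proof rather than an independent one; your final reduction via a polynomial $g$ with $g^p\asymp w_n$ (the paper's Lemma 3.3, not 3.2) is a sensible device for handling small $p$ globally, but it does not repair the per-cap claim.
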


Now we are in the position to the {\bf{proof of Lemma 3.1}}:
\begin{proof}
It is easy to check that each $w_n(x)$ as defined in \eqref{3.1} is again a doubling
weight. By the definition of $w_n(x)$ and \eqref{2.6}, for any $x,y\in\ss$
and $n=1,2,\cdots,$
\begin{equation}\label{3.2}w_n(x)\ll (1+nd(x,y))^{s_w}w_n(y).\end{equation}

We conclude from \eqref{3.2} and Lemma 3.3 that for some $\beta>0$, whose range will be decided
later, there exists a nonnegative spherical
polynpmial $g\in\Pi_n^d$ such that $$g(x)\asymp(w_n(x))^{-(\beta+1)}.$$

By the equivalent form above and Lemma 3.4 we can easily get that
\begin{equation}\label{3.3}\|g\|_{1,w}\asymp\|g\|_{1,w_n}\asymp\|w_n^{-\beta}\|_{1}\end{equation}

Thus, for some $\beta>0,$ we have
\begin{align}\sum_{\xi\in\Lambda}\lambda_\xi^{-\beta}
&\asymp\sum_{\xi\in\Lambda}(w(c(\xi,\frac{1}{n}))^{-\beta}
=\sum_{\xi\in\Lambda}n^{(d-1)\beta} (w_n(\xi))^{-\beta}\notag
\\&=n^{(d-1)(1+\beta)}\sum_{\xi\in\Lambda}
(w_n(\xi))^{-(\beta+1)}\int_{c(\xi,1/n)}w(x)d\sigma(x)\notag
\\&\asymp n^{(d-1)(1+\beta)}\sum_{\xi\in\Lambda}\int_{c(\xi,1/n)}(w_n(x))^{-(\beta+1)}w(x)d\sigma(x)
\notag\\&\asymp n^{(d-1)(1+\beta)}\int_{\ss}(w_n(x))^{-(\beta+1)}w(x)d\sigma(x)
\notag\\&\label{3.4}\asymp n^{(d-1)(1+\beta)}\int_{\ss}(w_n(x))^{-\beta}d\sigma(x),\end{align}
where in the two equalities, we used the definition of $w_n(x)$ in \eqref{3.1},
in the second inequality, we used the fact that $w_n(x)\asymp w_n(\xi)$
for $x\in c(\xi,1/n)$, in the last second inequality,
we used the property of $\frac{\delta}{n}$-maximal separable set $\Lambda$, and in the
last inequality, we used \eqref{3.3}.

It remains to show that the  integration in \eqref{3.4} is controlled by some constant
independent of $n$ and $d$.

We note from \eqref{2.7} and \eqref{3.1} that for each $n\in\mathbb{N},$
$$(w_n(x))^{-\beta}\asymp \prod_{i=1}^m \big(|\langle x,v_i\rangle|+n^{-1}\big)^{-2\kappa_i\beta},$$

Denote by $$\widetilde{w_i}(x)=\big(|\langle x,v_i\rangle|+n^{-1}\big)^{-2\kappa_i\beta},\ i=1,\dots,m,$$
and $r_i=\frac{\sum_{j=1}^m\kappa_j}{\kappa_i}=\frac{|\kappa|}{\kappa_i},$
then $\sum_{i=1}^m \frac{1}{r_i}=1.$

We continue our proof, the generalized H$\ddot{o}$lder's inequality shows that
\begin{align} \int_{\ss}(w_n(x))^{-\beta}d\sigma(x)
&\asymp\int_{\ss}\prod_{i=1}^m \big(|\langle x,v_i\rangle|+n^{-1}\big)^{-2\kappa_i\beta}d\sigma(x)\notag
\\&=\int_{\ss}\widetilde{w_1}(x),\cdots,\widetilde{w_m}(x)d\sigma(x)\notag
\\&\label{3.5}\le \|\widetilde{w_1}\|_{r_1}\cdots \|\widetilde{w_m}\|_{r_m}.\end{align}

Notice that \begin{align}\|\widetilde{w_i}\|_{r_i}
&=\big(\int_{\ss}\big|\big(|\langle x,v_i\rangle|
+n^{-1}\big)^{-2\kappa_i\beta}\big|^{\frac{|\kappa|}{\kappa_i}}d\sigma(x)\big)^{1/r_i}\notag\\
&\label{3.6}= \big(\int_{\ss}\big(|\langle x,v_i\rangle|+n^{-1}\big)^{-2|\kappa|\beta}d\sigma(x)\big)^{1/r_i}
.\end{align}

By \eqref{3.6} and the rotation invariance of Lebesgue measure $d\sigma(x),$
\begin{equation}\label{3.7}\|\widetilde{w_1}\|_{r_1}^{{r_1}}=\|\widetilde{w_2}\|_{r_2}^{{r_2}}=\cdots= \|\widetilde{w_m}\|_{r_m}^{{r_m}}\end{equation}

It follows from \eqref{3.5}-\eqref{3.7} that
\begin{align}&\int_{\ss}(w_n(x))^{-\beta}d\sigma(x)
\ll \|\widetilde{w_1}\|_{r_1}\cdots \|\widetilde{w_m}\|_{r_m}\notag
\\&\le\Big(\int_{\ss}\big(|\langle x,v_i\rangle|+n^{-1}\big)^{-2|\kappa|\beta}d\sigma(x)\Big)^{1/r_1+\cdots+1/r_m}\notag
\\&=\int_{\ss}\big(|\langle x,v_i\rangle|+n^{-1}\big)^{-2|\kappa|\beta}d\sigma(x)
=\omega_{d-1}\int_{-1}^1\frac{(1-t^2)^{\frac{d-3}{2}}}{(|t|+n^{-1})^{2|\kappa|\beta}}dt\notag
\\&=2\omega_{d-1}\int_{0}^1\frac{(1-t^2)^{\frac{d-3}{2}}}{(t+n^{-1})^{2|\kappa|\beta}}dt
\ll \sum_{l=1}^n\int_{\frac{l-1}{n}\le t\le \frac{l}{n}}\frac{dt}{(t+n^{-1})^{2|\kappa|\beta}}\notag
\\&\label{3.8}\le \sum_{l=1}^n (\frac{l}{n})^{-2|\kappa|\beta}\frac{1}{n}
=n^{2|\kappa|\beta-1}\sum_{l=1}^n l^{-2|\kappa|\beta}\ll 1,
\end{align}
where the last equality in \eqref{3.8} holds if $\beta\in(0,\frac{1}{2|\kappa|}).$
The proof of Lemma 3.1 is completed.\end{proof}

It is well known that $\#\Lambda\asymp n^{d-1}.$
Now for each integer $s$ and $w(x)=h_\kappa^2(x)$, given $\{w_{s,k}:k\in\Lambda_s^d\}$
of distinct points $w_{s,k}\in\ss$ satisfying
$$\min\limits_{k\neq k'\in\Lz_s^d}d(w_{s,k},w_{s,k'})\geq\frac{\delta}{2^{s+4}}\ \
\mbox{and}\ \
\max_{x\in\mathbb{S}^{d-1}}\min_{k\in\Lz_s^d}d(x,w_{s,k})<\frac{\delta}{2^{s+4}},$$

By Theorem A, there exists a sequence of numbers
\begin{equation}\label{3.9}\lambda_{s,k}\asymp\int_{c(w_{s,k},\frac{\delta}{2^{s+4}})}h_\kappa^2(x)d\sigma(x), \ k\in\Lambda_s^d,\end{equation}
such that for any $f\in\Pi_{2^{s+4}}^d,$

\begin{equation}\label{3.10}\frac{1}{\omega_d^\kappa}\int_{\mathbb{S}^{d-1}}f(y)h_\kappa^2(y) d\sigma(y)=\sum\limits_{k\in\Lambda_s^d}\lambda_{s,k}f(\omega_{s,k}),\end{equation}
and
\begin{equation}\label{3.11}\|f\|_{p,\kappa}\asymp\bigg\{\begin{array}{ll}\big(\sum\limits_{k\in\Lambda_s^d}
\lambda_{s,k}|f(\omega_{s,k})|^p\big)^\frac{1}{p},\ \ \
 &\mbox{if}\ 1\le p<\infty,\\\max\limits_{k\in\Lambda_s^d}|f(\omega_{s,k})|,
 \ \ \ &\mbox{if}\ p=\infty.\end{array}\end{equation}

For $w=(w_1,\ldots,w_m)\in\mathbb{R}^m,$ we define as usual
$$\|x\|_{{\ell}_{p,w}^{m}}=\(\sum_{i=1}^m|x_i|^p w_i\)^{1/p}$$ for $1\le p<\infty$
and $\|x\|_{{\ell}_\infty^m}=\max_{1\le i\le m}|x_i|$ for $p=\infty.$

We denote by ${\ell}_{p,w}^m$ the set of vectors $x\in\mathbb{R}^m$ equipped
with the norm $\|\cdot\|_{{\ell}_{p,w}^m},$ and $B{\ell}_{p,w}^m$ the unit ball of ${\ell}_{p,w}^m.$
In the case $w=(1,\dots,1),$ it returns to the standard instance and
we write ${\ell}_p^m,\ \|\cdot\|_{{\ell}_p^m},\ B{\ell}_p^m$ instead of ${\ell}_{p,w}^m,\ \|\cdot\|_{{\ell}_{p,w}^m},\ B{\ell}_{p,w}^m.$
\begin{lem} Let $r>0,\ 1\le p,q\le\infty.$ Then for $1\le p\le q\le\infty,$
there exists a constant $\beta\in(0,\frac{1}{2\gamma_\kappa}),$ such that
\begin{align}\label{3.12}&e_n\big(BW_p^r(h_\kappa^2),L_q(h_\kappa^2)\big)
\ll\sum_{s=0}^{\infty}2^{-s\big(r-(\frac{1}{p}-\frac{1}{q})(d-1)\big)}
\sum_{k=0}^{s+1}\big(\frac{\#\Lambda_s^d}{2^{(k-1)(d-1)}}\big)^{\frac{1}{\beta}(\frac{1}{p}-\frac{1}{q})}
e_{n_{s,k}}\big(B{\ell}_p^{m_{s,k}},{\ell}_q^{m_{s,k}}\big),\end{align}
where $\sum_{s=0}^\infty\sum_{k=0}^{s+1}n_{s,k}\le n,\ m_{s,1}=2,\ m_{s,k}\asymp2^{(k-1)(d-1)},\ 2\le k\le s, and\
m_{s,s+1}=\#\Lambda_s^d-2^{s(d-1)}.$
\end{lem}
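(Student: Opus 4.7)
The plan is to follow the standard discretization scheme: decompose $f\in BW_p^r(h_\kappa^2)$ along dyadic frequency scales, discretize each piece via the cubature formula of Theorem~A at the matching scale, partition the cubature nodes into weight layers with the help of Lemma~3.1, and reduce to the finite-dimensional entropy numbers $e_m(B\ell_p^m,\ell_q^m)$. Subadditivity of entropy numbers then reassembles the pieces.

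First I write $f=\sum_{s=0}^\infty A_s(f)$ with $A_s(f)\in\Pi_{2^{s+1}}^d$ and, by \eqref{2.4}, $\|A_s(f)\|_{p,\kappa}\ll 2^{-sr}$ uniformly for $f\in BW_p^r(h_\kappa^2)$. Fixing $s$, the Marcinkiewicz--Zygmund equivalence \eqref{3.11} on the point set $\{w_{s,k}:k\in\Lambda_s^d\}$ identifies $\|A_s(f)\|_{p,\kappa}$ with the weighted discrete norm $\bigl(\sum_{k\in\Lambda_s^d}\lambda_{s,k}|A_s(f)(w_{s,k})|^p\bigr)^{1/p}$, and analogously for $L_q$. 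So approximating $A_s(f)$ in $L_q(h_\kappa^2)$ reduces, up to uniform constants, to approximating the vector $(A_s(f)(w_{s,k}))_{k\in\Lambda_s^d}$ in $\ell_q(\lambda_{s,k})$ knowing that it lies in a ball of $\ell_p(\lambda_{s,k})$ of radius $\asymp 2^{-sr}$.

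Next I split $\Lambda_s^d$ into layers $\Lambda_{s,k}^d$, $k=1,\dots,s+1$, by grouping the cubature nodes into dyadic blocks of weight values: after arranging the $\lambda_{s,k}$ in increasing order, set $|\Lambda_{s,1}^d|=2$, $|\Lambda_{s,k}^d|\asymp 2^{(k-1)(d-1)}$ for $2\le k\le s$, with the last block absorbing the remainder $\#\Lambda_s^d-2^{s(d-1)}$. From Lemma~3.1 (applied with $n=2^{s+4}$) one has $\sum_{\xi\in\Lambda_s^d}\lambda_\xi^{-\beta}\ll 2^{s(d-1)(1+\beta)}$, and a pigeonhole argument on partial sums shows that the common weight $\lambda_{s,k}^*$ on the $k$-th layer is comparable to $(\#\Lambda_s^d)^{-1}\bigl(\#\Lambda_s^d/|\Lambda_{s,k}^d|\bigr)^{-1/\beta}$. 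On each layer the weighted and unweighted $\ell_p$, $\ell_q$ norms differ by factors $(\lambda_{s,k}^*)^{1/p}$ and $(\lambda_{s,k}^*)^{1/q}$, so the entropy problem restricted to $\Lambda_{s,k}^d$ is bounded by $2^{-sr}(\lambda_{s,k}^*)^{1/q-1/p}e_{n_{s,k}}(B\ell_p^{m_{s,k}},\ell_q^{m_{s,k}})$, where $m_{s,k}=|\Lambda_{s,k}^d|$. Substituting the estimate for $\lambda_{s,k}^*$ and using $\#\Lambda_s^d\asymp 2^{s(d-1)}$ converts $(\lambda_{s,k}^*)^{1/q-1/p}$ into $2^{s(d-1)(1/p-1/q)}\bigl(\#\Lambda_s^d/2^{(k-1)(d-1)}\bigr)^{(1/p-1/q)/\beta}$, which combines with $2^{-sr}$ to yield exactly the $2^{-s(r-(d-1)(1/p-1/q))}$ factor in \eqref{3.12}. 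Summing over $k$ within each $s$ and then over $s$ by subadditivity of entropy numbers, with $\sum_{s,k}n_{s,k}\le n$, proves the asserted inequality.

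The main obstacle will be the layered decomposition of $\Lambda_s^d$: one must verify, using Lemma~3.1 carefully, that the dyadic grouping of weights really partitions $\Lambda_s^d$ into blocks of cardinality $\asymp 2^{(k-1)(d-1)}$ on which the weights are mutually comparable to the predicted scale $(\#\Lambda_s^d/|\Lambda_{s,k}^d|)^{-1/\beta}/\#\Lambda_s^d$. Once that partition is in place, the remainder is routine exponent bookkeeping together with the standard telescoping provided by the subadditivity of entropy numbers.
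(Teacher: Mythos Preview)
Your overall scheme matches the paper's: dyadic decomposition via $A_s$, discretization through Theorem~A, splitting the node set into dyadic weight-blocks, and subadditivity of entropy numbers. But the step you yourself flag as the main obstacle is a genuine gap, not a matter of careful verification. Lemma~3.1 gives only $\sum_\xi \lambda_\xi^{-\beta}\ll n^{(d-1)(1+\beta)}$, which after ordering yields the \emph{one-sided} bound $\lambda_{(j)}\gg(\#\Lambda_s^d)^{-1}(j/\#\Lambda_s^d)^{1/\beta}$ for the $j$-th smallest weight; it does not force the weights within a dyadic block to be mutually comparable, and in general they are not. Your block estimate $(\lambda_{s,k}^*)^{1/q-1/p}\,e_{n_{s,k}}(B\ell_p^{m_{s,k}},\ell_q^{m_{s,k}})$ needs two-sided comparability: the $\ell_p$-ball inclusion uses a lower bound on the weights, but converting $\ell_q$-error back to $\ell_{q,w}$-error uses an upper bound. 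Using the only available upper bound $\lambda_\xi\ll(\#\Lambda_s^d)^{-1}$ produces the exponent $\tfrac{1}{p\beta}$ in place of $\tfrac{1}{\beta}(\tfrac1p-\tfrac1q)$, which is strictly worse and would not yield the range of $r$ in Theorem~1.1.

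The paper avoids this via the isometry in Lemma~3.6: the map $x_i\mapsto x_i w_i^{1/q}$ sends $(\ell_{p,w}^m,\ell_{q,w}^m)$ to $(\ell_{p,v}^m,\ell_q^m)$ with $v_i=w_i^{1-p/q}$. The target space is now unweighted, so on each dyadic block one only needs the inclusion $A_k\subset C\,B\ell_p^{m_k}$, and for that the one-sided lower bound $w_{(j)}^{-1}\le (m/j)^{1/\beta}$ from Lemma~3.1 suffices. This is precisely what produces the correct factor $(\#\Lambda_s^d/2^{(k-1)(d-1)})^{(1/p-1/q)/\beta}$. A smaller omission in your outline: going from a discrete $\ell_{q,w}$ cover back to an $L_q(h_\kappa^2)$ cover cannot use the Marcinkiewicz--Zygmund equivalence \eqref{3.11} directly, since the covering centers need not be polynomials; the paper handles this with the reconstruction operator $V_s(a)=\sum_k a_k\lambda_{s,k}L^\kappa_{2^{s+1},\eta}(\cdot,\omega_{s,k})$ and proves $\|V_s\|_{\ell_{q,w}\to L_q(h_\kappa^2)}\ll 1$ by interpolation between $q=1$ and $q=\infty$.
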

The following lemma plays a key role in the proof of Lemma 3.5.
\begin{lem} Let
$w=(w_1,\dots,w_m)\in \Bbb R^m$ satisfying $w_i>0,\ 1\le i\le m$ and
let
\begin{equation}\label{2.11-0}
\sum_{j=1}^m w_j^{-\gamma}\le m,\ \ \ {\rm for\ \ some}\ \
\gamma>0.\end{equation} Then for $1\le p\le q\le \infty$, there exists $j_0\in\mathbb{N}$ such
that $2^{j_0}\le m< 2^{j_0+1}$ and
$$ e_n(B{\ell}_{p,w}^m, \ell _{q,w}^m)\le\sum_{k=1}^{j_0} \Big(\frac m{2^{k-1}}\Big)^{\frac 1\gamma(\frac 1 p-\frac
1q)}e_{n_k}(B{\ell}_p^{m_k},\ell_q^{m_k}),$$ where $m_1=2,m_k=2^{k-1},2\le k\le{j_0}-1,m_{j_0}=m-2^{{j_0}-1}$,
and $\sum_{k=1}^{j_0}n_k\leq n$.\end{lem}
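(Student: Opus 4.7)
My plan is to sort the weights so that $w_1\le w_2\le\cdots\le w_m$ without loss of generality, split the index set dyadically into $I_1=\{1,2\}$, $I_k=\{2^{k-1}+1,\ldots,2^{k}\}$ for $2\le k\le j_0-1$ and $I_{j_0}=\{2^{j_0-1}+1,\ldots,m\}$, where $j_0$ is determined by $2^{j_0}\le m<2^{j_0+1}$, and to bound the entropy of each block separately. The driving elementary estimate comes straight from the hypothesis: since $w_j^{-\gamma}$ is non-increasing in $j$, we have $j\,w_j^{-\gamma}\le\sum_{i=1}^{j}w_i^{-\gamma}\le m$, hence $w_j\ge (j/m)^{1/\gamma}$; in particular $w_k^{*}:=\min_{j\in I_k}w_j\ge (2^{k-1}/m)^{1/\gamma}$ for every $k$.

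For the reduction to block-wise estimates I would use a product covering. Any $x\in B{\ell}_{p,w}^m$ satisfies $\sum_{j\in I_k}w_j|x_j|^p\le 1$ for every $k$, so the restriction $x|_{I_k}$ lies in $B{\ell}_{p,w|_{I_k}}^{m_k}$ and consequently $B{\ell}_{p,w}^m\subset\prod_{k=1}^{j_0}B{\ell}_{p,w|_{I_k}}^{m_k}$. Given optimal coverings of each factor by $2^{n_k}$ balls of radius $\varepsilon_k$ in ${\ell}_{q,w|_{I_k}}^{m_k}$, the product covering uses $\prod_k 2^{n_k}=2^{\sum_k n_k}$ balls; because the blocks have disjoint supports, for a point $(z_1,\ldots,z_{j_0})$ with $\|z_k\|_{{\ell}_{q,w|_{I_k}}^{m_k}}\le\varepsilon_k$ one has
$$
\|(z_1,\ldots,z_{j_0})\|_{{\ell}_{q,w}^m}=\Bigl(\sum_{k=1}^{j_0}\|z_k\|^q\Bigr)^{1/q}\le\sum_{k=1}^{j_0}\varepsilon_k
$$
(with the obvious modification when $q=\infty$), so any distribution with $n_1+\cdots+n_{j_0}\le n$ yields
$$
e_n(B{\ell}_{p,w}^m,{\ell}_{q,w}^m)\le\sum_{k=1}^{j_0}e_{n_k}\bigl(B{\ell}_{p,w|_{I_k}}^{m_k},{\ell}_{q,w|_{I_k}}^{m_k}\bigr).
$$

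The per-block estimate I would obtain by an isometric coordinate change $x_j\mapsto w_j^{1/p}x_j$, which identifies ${\ell}_{p,w|_{I_k}}^{m_k}$ with $\ell_p^{m_k}$ (and the analogous change for $q$ identifies ${\ell}_{q,w|_{I_k}}^{m_k}$ with $\ell_q^{m_k}$). Under these identifications the identity embedding becomes the diagonal operator $D_k$ with entries $w_j^{1/q-1/p}$, $j\in I_k$. Because $1/q-1/p\le 0$ and $w_j\ge w_k^{*}$, each entry has absolute value at most $(w_k^{*})^{1/q-1/p}$, so I can factor $D_k=(w_k^{*})^{1/q-1/p}D_k'$ with $D_k'$ a diagonal contraction; then $D_k'(B\ell_p^{m_k})\subset B\ell_p^{m_k}$ and entropy-number monotonicity give
$$
e_{n_k}\bigl(B{\ell}_{p,w|_{I_k}}^{m_k},{\ell}_{q,w|_{I_k}}^{m_k}\bigr)\le (w_k^{*})^{1/q-1/p}e_{n_k}(B\ell_p^{m_k},\ell_q^{m_k})\le\bigl(m/2^{k-1}\bigr)^{(1/\gamma)(1/p-1/q)}e_{n_k}(B\ell_p^{m_k},\ell_q^{m_k}),
$$
and combining with the previous display yields the stated inequality.

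The main technical obstacle is that the weights inside a single dyadic block $I_k$ are only controlled from below by $w_k^{*}$, never from above, so one cannot simply replace them by a single constant and reduce to a pure scaling. The diagonal-operator factorization above handles this cleanly by exploiting the sign of $1/q-1/p$: the non-positive exponent automatically converts the lower bound $w_j\ge w_k^{*}$ into the upper bound $|D_{k,jj}|\le (w_k^{*})^{1/q-1/p}$ needed to extract a scalar, after which the residual diagonal acts as a contraction on $\ell_p^{m_k}$ and the reduction to the unweighted entropy $e_{n_k}(B\ell_p^{m_k},\ell_q^{m_k})$ is immediate; the rest is bookkeeping against $w_k^{*}\ge (2^{k-1}/m)^{1/\gamma}$.
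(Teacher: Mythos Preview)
Your proof is correct and follows essentially the same route as the paper: order the weights, use the dyadic block decomposition $I_1=\{1,2\}$, $I_k=\{2^{k-1}+1,\dots,2^k\}$, $I_{j_0}=\{2^{j_0-1}+1,\dots,m\}$, invoke subadditivity of entropy numbers over the blocks, and derive the key pointwise bound $w_j\ge(j/m)^{1/\gamma}$ from the hypothesis. The only cosmetic difference is that the paper performs a single global isometry $x_j\mapsto w_j^{1/q}x_j$ to pass to $e_n(B\ell_{p,v}^m,\ell_q^m)$ with $v_j=w_j^{1-p/q}$ and then shows the set inclusion $A_k\subset(m/2^{k-1})^{\frac1\gamma(\frac1p-\frac1q)}B\ell_p^{m_k}$ directly, whereas you apply block-wise isometries and phrase the same step as factoring a diagonal operator through a contraction; the two arguments are equivalent.
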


\begin{proof}
Without loss of generality we assume that \begin{equation}\label{2.11-1}w_1\le w_2\le\dots\le w_m.\end{equation}
First we claim that\begin{equation}\label{2.11-2}e_n(B\ell_{p,w}^m, \ell _{q,w}^m)=e_n(B\ell_{p,v}^m, \ell _{q}^m),\end{equation}
where $v=(w_1^{1-\frac{p}{q}},\dots,w_m^{1-\frac{p}{q}})$.

In fact, for any $x=(x_1,\dots,x_m)\in B\ell_{p,w}^m,$ the mapping $U$ defined by
$$Ux=(x_1w_1^{\frac{1}{q}},\dots,x_1w_1^{\frac{1}{q}})$$
yields an isometry of $B\ell_{p,w}^m$ onto $B\ell_{p,v}^m$ and $U^{-1}$ yields an isometry of $\ell _{q}^m$
onto $\ell _{q,w}^m$. As a consequence of the definition of the entropy numbers and the properties of $U$ and $U^{-1}$ we obtain \eqref{2.11-2}.

Now for arbitrary $x\in \mathbb{R}^m,$ we set $$S_k x=(x_1,\dots,x_k,0,\dots,0),k=1,\dots,m$$
and  $$\delta_1x=S_2x, \ \delta_jx=S_{2^j}x-S_{2^{j-1}}x,\ 2\le j\le j_0-1, \ {\rm and}  \ \delta_{j_0}x=x-S_{2^{j_0-1}}.$$
Then $x=\sum\limits_{j=1}^{j_0}\delta_jx.$
It follows from \eqref{2.11-0} and \eqref{2.11-1} that for $1 \le j\le m$
$$jw_j^{-\gamma}\le\sum\limits_{i=1}^{j}w_i^{-\gamma}
\le\sum\limits_{i=1}^m w_i^{-\gamma}\le m,$$
which implies  \begin{equation}\label{2.11-3}w_{j}^{-1}\le(\frac{m}{j})^{\frac{1}{\gamma}}, \ 1 \le j\le m.\end{equation}

Define the set $$ A_k:=\bigg\{x\in \mathbb{R}^{m_k}: (\sum_{i=1}^{m_k}|x_i|^pv_{i+2^{k-1}})^{\frac{1}{p}}\le 1\bigg\},1\le k\le j_0. $$
We deduce from \eqref{2.11-3} that for any $x\in A_k$
\begin{align*}&\sum_{i=1}^{m_k}|x_i|^p=\sum_{i=1}^{m_k}|x_i|^pv_{i+2^{k-1}}w_{i+2^{k-1}}^{-(1-\frac{p}{q})}
\\&\le\sum_{i=1}^{m_k}|x_i|^pv_{i+2^{k-1}}(\frac{m}{2^{k-1}})^{\frac{1}{\gamma}(1-\frac pq)}
\le(\frac{m}{2^{k-1}})^{\frac{1}{\gamma}(1-\frac pq)}
.\end{align*}
By the above inequality we have \begin{equation}\label{2.11-4}A_k\subset (\frac{m}{2^{k-2}})^{\frac{1}{\gamma}(\frac1p-\frac1q)}B\ell_p^{m_k}.\end{equation}
Combine with \eqref{2.11-2}, \eqref{2.11-4} and the definition and the properties of the entropy numbers we can get the conclusion of the lemma.
\end{proof}
\begin{rem}If we choose suitable $n_k$, we can prove that for $0<p<q<\infty,$
$$e_n(B{\ell}_{p,w}^m, \ell _{q,w}^m)\le\Big(\frac mn\Big)^{\frac 1\gamma(\frac 1 p-\frac1q)},\ \ if \ \ 1\le n\le \log2m,$$
$$e_n(B{\ell}_{p,w}^m, \ell _{q,w}^m)\le\Big(\frac mn\Big)^{\frac 1\gamma(\frac 1 p-\frac1q)}n^{-(1/p-1/q)},\ \ if \ \ \log2m\le n\le 2m,$$
and$$e_n(B{\ell}_{p,w}^m, \ell _{q,w}^m)\le\Big(\frac mn\Big)^{\frac 1\gamma(\frac 1 p-\frac1q)}2^{-\frac n{8m}}m^{-(1/p-1/q)},\ \ if \ \  2m\le n.$$
\end{rem}

\begin{rem}
Let $\omega=(\omega_1,\cdot\cdot\cdot,\omega_n)\in\mathbb{R}^n$ satisfying $\omega_i>0,1\le i\le n$ and let
$$\sum_{j=1}^n\omega_j^{-\beta}\le n, \ \ for \ \ some \ \ \beta>0.$$
Then for $1\le m\le n$ and $1\le p\le q\le\infty,$
$$S_m(B{\ell}_{p,w}^m, \ell _{q,w}^m)\le2^{\frac1\beta(\frac1p-\frac1q)}\Big(\frac nm\Big)^{\frac1\beta(\frac1p-\frac1q)}S_{\frac m2}(B{\ell}_{p}^n, \ell _{q}^n),$$
where $S_m$ denotes one of the Kolmogorov $m$-width $d_m$ or the linear m-width $\delta_m$
(see \cite{Dai}) or the Gelfand width $d_m$ (see \cite{HW}).
We remark that the similar result cannot hold for the entropy numbers. This is
due to the fact that $S_m(B{\ell}_{p,w}^m, \ell _{q,w}^m),w)=0$ if $m\ge n$, while
$_m(B{\ell}_{p,w}^m, \ell _{q,w}^m),w)>0$ for all $n,m \in \mathbb{N}.$
\end{rem}
Now we are ready to prove {\bf{Lemma 3.5.}}

\

\noindent {\it Proof of Lemma 3.5}:

Denote by $id: X\mapsto Y$ the identity operator from $X$ to
$Y$, where $X$ and $Y$ are normed linear spaces. Then
$$e_n\big(BW_p^r(h_\kappa^2),L_q(h_\kappa^2)\big)=e_n\big(id:
W_p^r(h_\kappa^2)\mapsto L_q(h_\kappa^2)\big).$$

Since for $f\in L_q(h_\kappa^2),$ $f=\sum_{s=0}^\infty A_s f$ in
$L_q(h_\kappa^2)$ norm, which implies that $id=\sum_{s=0}^\infty A_s.$

It follows immediately that $$e_n\big(id:
W_p^r(h_\kappa^2)\mapsto L_q(h_\kappa^2)\big)\le
\sum_{s=0}^\infty e_{n_s}\big(A_s:W_p^r(h_\kappa^2)\mapsto L_q(h_\kappa^2)\big),$$
where $\sum_{s=0}^\infty n_s\le n.$

It is clearly from \eqref{2.4} that for $f\in W_p^r(h_\kappa^2),\ 1\le p\le\infty$
and $s\ge 0,$
$$\|A_s(f)\|_{p,\kappa}\ll2^{-sr}\|f\|_{W_p^r(h_\kappa^2)}.$$

Hence, \begin{align}&e_{n_s}\big(A_s:W_p^r(h_\kappa^2)\mapsto L_q(h_\kappa^2)\big)
=e_{n_s}\big(A_s(BW_p^r(h_\kappa^2)),L_q(h_\kappa^2)\big)\notag
\\&\ll2^{-sr}e_{n_s}\big(BL_p(h_\kappa^2)\cap\Pi_{2^{s+1}}^d, L_q(h_\kappa^2)\big)\notag\end{align}

In order to prove \eqref{3.12}, we proceed to show that
\begin{align}&\label{3.13}e_{n_s}\big(BL_p(h_\kappa^2)\cap\Pi_{2^{s+1}}^d, L_q(h_\kappa^2)\big)\notag
\\&\ll 2^{s(d-1)(\frac{1}{p}-\frac{1}{q})}
\sum_{k=1}^{s}\big(\frac{\#\Lambda_s^d}{2^{(k-1)(d-1)}}\big)^{\frac{1}{\beta}(\frac{1}{p}-\frac{1}{q})}
e_{n_{s,k}}(B{\ell}_p^{m_{s,k}},{\ell}_q^{m_{s,k}}).\end{align}

Now we
define the operators $U_s: BL_p(h_\kappa^2)\cap\Pi_{2^{s+1}}^d\mapsto
\ell_{p,w}^{\#\Lz_s^d}$ and $V_s:\ell_{q,w}^{\#\Lz_s^d}\mapsto
L_q(h_\kappa^2)$ by
$$U_s(f)=(f(\omega_{s,1}),\dots,f(\omega_{s,\#\Lz_s^d})),\ \ f\in BL_p(h_\kappa^2)\cap\Pi_{2^{s+1}}^d
,\ \ w=(\lambda_{s,k})_{k\in\Lambda_s^d}$$and
$$V_s(a)(x)=\sum_{k=1}^{\#\Lz_s^d}a_{s,k}\lz_{s,k}L_{2^{s+1},\eta}^\kappa(x,\omega_{s,k}),\
x\in\ss, \ a=(a_{s,k})\in \ell_q^{\#\Lz_s^d},$$
where $L_{n,\eta}^\kappa$ is defined as in \eqref{2.2},  $\#\Lz_s^d,\,\lz_{s,k},\,\omega_{s,k}$  as in \eqref{3.10}
and \eqref{3.11}.

We conclude from \eqref{3.11} that for $f\in BL_p(h_\kappa^2)\cap\Pi_{2^{s+1}}^d,$
\begin{equation}\label{3.14}\|U_s(f)\|_{\ell_{p,w}^{\#\Lz_s^d}}\asymp\|f\|_{p,\kappa}.\end{equation}

Furthermore, we claim that for $1\le p\le\infty,$
\begin{equation}\label{3.15}\|V_s(a)\|_{q,\kappa}\ll\|a\|_{\ell_{q,w}^{\#\Lz_s^d}}.\end{equation}

As a matter of fact, for $q=1$, \eqref{3.15} follows directly from
property (2) of $\eta_n$
$$\max_{y\in \ss}\|L_n^\kappa(\cdot,y)\|_{1,\kappa}\ll 1.$$
For $q=\infty$, by \eqref{3.11} with $p=1$ and $f(y)=L_n^\kappa(x,y)$, we have
$$
\|V_s(a)\|_{\infty}\le
\|a\|_{\ell_\infty^{\#\Lz_s^d}}\sum_{i=1}^{\#\Lz_s^d}\lambda_{s,k}|L_{2^{s+1},\eta}^\kappa(x,\omega_{s,k})|\asymp
\|a\|_{\ell_\infty^{\#\Lz_s^d}}\ \|L_{2^{s+1},\eta}(x,\cdot)\|_{1,\kappa}\ll
\|a\|_{\ell_\infty^{\#\Lz_s^d}}.$$
For $1<q<\infty$, \eqref{3.15} follows from the Riesz-Thorin theorem.

On account of \eqref{2.3} and \eqref{3.10}, we check at once that
for $f\in L_p(h_\kappa^2)\cap\Pi_{2^{s+1}}^d,$
$$f(x)=\frac{1}{a_d^\kappa}\int_{\ss} f(y)L_{2^{s+1},\eta}^\kappa (x,y)h_\kappa^2(y)d\sigma(y)
=\sum_{k\in\Lambda_s^d}\lambda_{s,k}f(\omega_{s,k})L_{2^{s+1},\eta}^\kappa (x,\omega_{s,k})
=V_sU_s(f)(x).$$

This implies that the operator $id:L_p(h_\kappa^2)\cap\Pi_{2^{s+1}}^d\mapsto
L_q(h_\kappa^2)$ can be factored as follows:
$$id: L_p(h_\kappa^2)\cap\Pi_{2^{s+1}}^d
\stackrel{U_s}{\longrightarrow} \ell_{p,\omega}^{\#\Lz_s^d}
\stackrel{id}{\longrightarrow}
\ell_{q,\omega}^{\#\Lz_s^d}\stackrel{V_s}{\longrightarrow}  L_q(h_\kappa^2).$$

It then
follows by \eqref{3.14}, \eqref{3.15} and the properties of entropy numbers that
\begin{align}
e_{n_s}(id:L_p(h_\kappa^2)\cap\Pi_{2^{s+1}}^d\mapsto L_q(h_\kappa^2))&\le\|V_s\|\,
e_{n_s}(id: \ell_{p,\omega}^{\#\Lz_s^d}\mapsto \ell_{q,w}^{\#\Lz_s^d})\,\|U_s\|\notag\\&\ll
e_{n_s}(B{\ell}_{p,w}^{\#\Lz_s^d},\ell_{q,w}^{\#\Lz_s^d})\label{3.16}.\end{align}

At last, it follows from \eqref{3.9} and Lemma 3.1 that
for a constant $\beta\in(0,\frac{1}{2\gamma_\kappa}),$
$$\sum_{k\in\Lambda_s^d}(\#\Lambda_s^d\lambda_{s,k})^{-\beta}\ll \#\Lambda_s^d.$$

Hence by the inequality above we use Lemma 3.6 to the vector $\tilde w=(c\#\Lambda_s^d \lambda_{s,k})_{k\in\Lambda_s^d}$, we get
that
\begin{align}&e_{n_s}(B{\ell}_{p,w}^{\#\Lz_s^d},\ell_{q,w}^{\#\Lz_s^d})
\asymp\ (\#\Lambda_s^d)^{\frac{1}{p}-\frac{1}{q}}e_{n_s}(B{\ell}_{p,\tilde w}^{\#\Lz_s^d},
\ell_{q,\tilde w}^{\#\Lz_s^d})\notag\notag\\&\ll2^{s(d-1)(\frac{1}{p}-\frac{1}{q})}
\sum_{k=0}^{s}\big(\frac{\#\Lambda_s^d}{2^{(k-1)(d-1)}}\big)^{\frac{1}{\beta}(\frac{1}{p}-\frac{1}{q})}
e_{n_{s,k}}(B{\ell}_p^{m_{s,k}},{\ell}_q^{m_{s,k}})\label{3.17},\end{align}
which combines with \eqref{3.16}, gives \eqref{3.13} and this finish the proof of Lemma 3.5.$\hfill\Box$

\begin{lem}
Let  $r>0,\ 1\le
 p,q\le\infty$.  Then  there exists
a positive integer $N$ such that  $N\asymp n$, $N\geq2n$, and
\begin{equation*}e_n(BW_p^r(h_\kappa^2),L_q(h_\kappa^2))\gg
n^{-\frac{r}{d-1}+\frac{1}{p}-\frac{1}{q}}e_n(B{\ell}_p^N
,\ \ell_q^N).\end{equation*}
\end{lem}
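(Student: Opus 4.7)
The plan is to construct a bounded linear embedding $T : \ell_p^N \to W_p^r(h_\kappa^2)$ whose image is a polynomial subspace of $\Pi_m^d$ at frequency $m \asymp n^{1/(d-1)}$ with $N \asymp m^{d-1} \asymp n$, and for which the $L_q(h_\kappa^2)$-norm on $T(\ell_p^N)$ is equivalent to $n^{-r/(d-1)+1/p-1/q}$ times the pulled-back $\ell_q^N$-norm. Applying monotonicity of entropy numbers to the inclusion $T(B\ell_p^N) \subset C\,BW_p^r(h_\kappa^2)$ and the fact that on the finite-dimensional image the $L_q$-metric equals (up to the scalar $n^{-r/(d-1)+1/p-1/q}$) the pulled-back $\ell_q^N$-metric then produces the lower bound.

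I first fix a maximal $\tfrac{\delta}{m}$-separated set $\Lambda \subset \ss$ as in Theorem~A, and pass to the subset
$\Lambda_0 := \{\xi \in \Lambda : |\langle \xi, v\rangle| \geq c_0 \ \text{for every}\ v \in \mathcal{R}_+\}$
of points sitting at a fixed positive distance from every reflecting hyperplane of $G$. Since the union of these hyperplanes has empty interior, $N := \#\Lambda_0 \asymp m^{d-1} \asymp n$, and inflating $m$ by a universal constant guarantees $N \geq 2n$. The decisive gain of this restriction is that $h_\kappa^2(\xi) \asymp 1$ on $\Lambda_0$, so by \eqref{2.7} the cubature weights satisfy $\lambda_\xi := h_\kappa^2(c(\xi, 1/m)) \asymp m^{-(d-1)}$ uniformly in $\xi \in \Lambda_0$. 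This uniformization kills the weights in the forthcoming discretization and is what produces the \emph{unweighted} $\ell_p^N$ and $\ell_q^N$ of the statement.

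I next construct needle polynomials $\varphi_\xi \in \Pi_m^d$ ($\xi \in \Lambda_0$) with $\varphi_\xi(\xi) \asymp 1$ and rapid decay $|\varphi_\xi(x)| \leq C_K (1 + m\, d(x,\xi))^{-K}$ for every $K$, for instance via appropriate powers of $L_{m,\eta}^\kappa(\cdot,\xi)$ from \eqref{2.2}. Combining Theorem~A applied to a sufficiently fine $\tfrac{\delta'}{m}$-net containing $\Lambda_0$, the uniform weight estimate above, and the rapid decay yields the discretization
\begin{equation*}
\bigl\| \textstyle\sum_{\xi \in \Lambda_0} a_\xi \varphi_\xi \bigr\|_{s,\kappa} \asymp m^{-(d-1)/s} \|a\|_{\ell_s^N}, \qquad 1 \leq s \leq \infty,
\end{equation*}
both for $s=p$ and $s=q$. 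Together with the Bernstein-type estimate $\|(-\Delta_{h,0})^{r/2} P\|_{p,\kappa} \ll m^r \|P\|_{p,\kappa}$ on $\Pi_m^d$, the rescaled embedding
$T a := C^{-1} m^{(d-1)/p - r} \sum_{\xi \in \Lambda_0} a_\xi \varphi_\xi$
sends $B\ell_p^N$ into $BW_p^r(h_\kappa^2)$, while $\|T a\|_{q,\kappa} \asymp m^{(d-1)(1/p - 1/q) - r} \|a\|_{\ell_q^N} = n^{1/p - 1/q - r/(d-1)} \|a\|_{\ell_q^N}$, which is the desired rescaling.

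The main obstacle, flagged in the introduction, is a support issue specific to the Dunkl setting: since $\Delta_{h,0}$ carries Dunkl difference terms, the effective influence of a needle $\varphi_\xi$ under the Bernstein or discretization operations can spread onto the entire $G$-orbit $G\xi$, which threatens the near-orthogonality the argument relies upon. The restriction to $\Lambda_0$ is exactly what resolves this: once every $\xi \in \Lambda_0$ is bounded away from all walls, the $|G|$ points of $G\xi$ are pairwise separated by $\gg 1/m$, and distinct orbits $G\xi, G\xi'$ remain mutually well separated. One may therefore regard the entire construction as taking place on the larger separated set $\bigcup_{\xi \in \Lambda_0} G\xi$, on which Theorem~A applies cleanly as a doubling-weight cubature and the reflection-induced cross terms are absorbed into the implicit constants. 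This is precisely the point at which the \emph{property of general reflection group} alluded to in the introduction enters the lower estimate.
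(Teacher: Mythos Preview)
Your route is genuinely different from the paper's. The paper builds the embedded copy of $\ell_p^N$ out of compactly supported smooth bump functions $\varphi_i(x)=\varphi(l\,d(x,x_i))$, applies $(-\Delta_{h,0})^v$ for integer $v$, and then passes to fractional order via a Kolmogorov-type inequality. The Dunkl difference terms force $\supp(-\Delta_{h,0})^v\varphi_i\subset\bigcup_{\rho\in G}\rho(c(x_i,1/l))$, and the paper controls the resulting overlap by the elementary orbit count $\sum_i\chi_{\cup_\rho\,\rho(c(x_i,1/l))}\le\#G$, using only that each $\rho$ is an isometry. You instead propose polynomial needlets in $\Pi_m^d$ plus a Bernstein inequality $\|(-\Delta_{h,0})^{r/2}P\|_{p,\kappa}\ll m^r\|P\|_{p,\kappa}$; if that worked it would bypass the support-spreading issue entirely, which is an attractive simplification.

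The gap is that your construction rests on the pointwise decay $|\varphi_\xi(x)|\le C_K(1+m\,d(x,\xi))^{-K}$ for needlets built from $L_{m,\eta}^\kappa(\cdot,\xi)$, and this is not part of the paper's toolkit for a \emph{general} reflection group. The paper deliberately uses only the $L_p$-mapping properties of $\eta_n$ (items (1)--(3) after \eqref{2.2}) and the doubling-weight cubature of Theorem~A; pointwise kernel bounds for $L_{n,\eta}^\kappa$ typically rely on explicit formulas for the intertwining operator $V_\kappa$, which are available for $\mathbb{Z}_2^d$ but not in general. Without that localization your two-sided discretization $\|\sum_\xi a_\xi\varphi_\xi\|_{s,\kappa}\asymp m^{-(d-1)/s}\|a\|_{\ell_s^N}$ is unproved (the lower bound in particular), and the argument collapses. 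Your final paragraph also misdiagnoses the difficulty: a Bernstein inequality is a global norm bound and involves no ``spreading'' whatsoever, so if your needlets really were point-localized, the reflection group would play no role in your argument at all. The orbit-separation discussion you give is precisely the mechanism the paper needs for its bump-function approach, not a fix for yours; conversely, the genuine risk in your approach---that the Dunkl kernel may only be orbit-localized, or admit no usable pointwise bound for general $G$---is not what you address. Even granting orbit-localization, you would additionally need to restrict $\Lambda_0$ to a single Weyl chamber to ensure distinct $\xi,\xi'\in\Lambda_0$ lie in distinct $G$-orbits, which you do not do.
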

\begin{proof}
The proof idea is standard (see for example \cite{BD,HW,WWW}).
However, because of the support property of $h$-spherical Laplace-Beltrami operator, we need to overcome some difficulty.

For convenience, we denote again by
$h_\kappa(x)=\prod_{v\in\mathcal{R}_+}|\langle x,v\rangle|^{\kappa_v}
=\prod_{j=1}^m|\langle x,v_j\rangle|^{\kappa_j}.$
To prove the lower estimate, let
$$E_j=\{x\in\ss: |\frac{\pi}{2}-d(x,v_j)|\le2\varepsilon_{d,m}\},\ 1\le j\le m,$$
and
$$\widetilde{E}_j=\{x\in\ss: |\frac{\pi}{2}-d(x,v_j)|\le\varepsilon_{d,m}\},\ 1\le j\le m,$$
where $\varepsilon_{d,m}$ is a sufficiently small positive constant depending only on
$d$ and $m$.

Le $|E|$ denote the Lebesgue measure of a set $E\subset\ss$.
A straightforward calculation shows that
$$\big|\bigcup_{j=1}^mE_j\big|\le\sum_{j=1}^m|E_j|\le c_{d}m\varepsilon_{d,m}\le\frac12|\ss|$$
provided that $\varepsilon_{d,m}$ is small enough. If $x\in\ss\backslash(\bigcup_{j=1}^mE_j),$ then
$$|\langle x,v_j\rangle|\ge\sin2\varepsilon_{d,m},\ j=1,\dots,m.$$
Hence $$h_\kappa^2(x)=\prod_{j=1}^m|\langle
x,v_j\rangle|^{2\kappa_j}
\gg(\sin2\varepsilon_{d,m})^{2|\kappa|}\gg1,\
x\in\ss\backslash(\bigcup_{j=1}^mE_j).$$ We assume that
$l\in\mathbb{N}$ is sufficiently large and $c_1l^{d-1}\le n\le
c_2l^{d-1}$ with $c_1,c_2>0$ being independent of $n$ and $l$. We
let $\{x_i\}_{i=1}^N\subset\ss\backslash(\bigcup_{j=1}^mE_j)$ such
that $N\asymp l^{d-1}$,
$$c(x_i,\frac1l)\cap c(x_j,\frac1l)=\varnothing,\ \rm{if}\ i\neq j,$$
and $c(x_i,\frac1l)\subset\ss\backslash(\bigcup_{j=1}^m\widetilde{E}_j).$
Obviously, such points $\{x_i\}_{i=1}^N$ exist. We may take $c_2$ sufficiently small so that $N\ge2n.$

Let $\varphi$ be a nonnegative $C^{\infty}$-function on $\mathbb{R}$ supported in $[0,1]$ and
be equal to $1$ on $[0,\frac12].$ We define
$$\varphi_i(x)=\varphi (ld(x,x_i)),\ i=1,\dots,N$$
and set
$$A_N:=\Big\{f_a(x)=\sum_{i=1}^N a_i\varphi_i(x): a=(a_1,\dots,a_N)\in\mathbb{R}^N\Big\}.$$
It is clearly that $$\supp\varphi_i\subset
c(x_i,\frac1l)\subset\ss\backslash(\bigcup_{j=1}^m\widetilde
E_j)$$ and
 $$\|\varphi_i\|_{p,\kappa}\asymp\(\int_{c(x_i,\frac1l)}|\varphi (ld(x,x_i))|^p
d\sigma(x)\)^{1/p}\asymp l^{-\frac{d-1}{p}}$$
and $$\supp\varphi_i\bigcap\supp\varphi_j=\varnothing,\ (i\neq j).$$
Hence, for $f_a\in A_N,\ a=(a_1,\dots,a_N)\in\mathbb{R}^N$,
\begin{equation}\label{3.18}
\|f_a\|_{p,\kappa}\asymp\(l^{-(d-1)}\sum_{i=1}^N|a_i|^p\)^{1/p}=l^{-\frac{d-1}{p}}\|a\|_{{\ell}_p^N}.
\end{equation}
Next, we note that if $f\in C_c^{\infty}(\mathbb{R}^d),$ then by
the definition of $\mathcal{D}_k$ we have
$$\supp\mathcal{D}_k f\subset\bigcup_{\rho\in G}\rho(\supp f),\ k=1,\dots,d.$$
where $G$ is the finite reflection group generated by
$\mathcal{R}_+$, $\rho(E)=\{\rho x:x\in E\}.$ This, combining
with the fact that the set $\bigcup_{\rho\in G}\rho (E)$ is
invariant under the action of the group $G$,  means that for
$v\in\mathbb{N},$
$$\supp(-\Delta_h)^vf\subset\bigcup_{\rho\in G}\rho(\supp f).$$
By the definition of $h$-Laplacian-Beltrami operator, for $\xi\in\ss,$ we have
$$\Delta_{h,0}\varphi_i(\xi)=\Delta_h\(\varphi_i(md(\frac{x}{\|x\|},x_i))\){\Big|_{x=\xi}},$$
which implies that
\begin{equation}\label{3.18-0}
\supp\Delta_{h,0}\varphi_i\subset\bigcup_{\rho\in G}\rho(c(x_i,\frac1l)).
\end{equation}
Furthermore,
$$\supp(-\Delta_{h,0})^v\varphi_i\subset\bigcup_{\rho\in G}\rho(c(x_i,\frac1l)),\ v=1,2,\dots.$$
Finally, we can verify that
$$\|(-\Delta_h)^v\varphi_i\|_{\infty}\le l^{2v},\ \ 1\le i\le N,\ v=1,2,\dots,$$
which implies that
\begin{equation}\label{3.19}
\|(-\Delta_{h,0})^v\varphi_i\|_{\infty}\ll l^{2v},\ \ v=1,2,\dots.
\end{equation}
By \eqref{3.18-0} and \eqref{3.19}, we have
\begin{equation}\label{3.19-0}
\|(-\Delta_{h,0})^v\varphi_i\|_{p,\kappa}\ll l^{2v-(d-1)/p},\ \ v=1,2,\dots.
\end{equation}
For $f_a\in A_N,\
a=(a_1,\dots,a_N),$ Then
$$(-\Delta_{h,0})^vf_a(x)=\sum_{i=1}^Na_i(-\Delta_{h,0})^v\varphi_i(x).$$
Since $\supp(-\Delta_{h,0})^v\varphi_i\subset\bigcup_{\rho\in G}\rho(c(x_i,\frac1l)),\ v\in \mathbb{N}$, we get
 that for any  $x\in\ss$,
$$\#\{i : a_i(-\Delta_{h,0})^v\varphi_i(x)\neq 0, \ 1\le i\le N\}
\le \sum_{i=1}^N\chi_{\bigcup\limits_{\rho\in G}\rho(c(x_i,\frac1l))}(x), \ v\in \mathbb{N},$$  where
$\chi_E(x)$ is the characteristic function of a set $E$. We note
that
\begin{eqnarray}\label{3.20}
&&\sum_{i=1}^N\chi_{\bigcup\limits_{\rho\in G}\rho(c(x_i,\frac1l))}(x)
\le\sum_{i=1}^N\sum_{\rho\in G}\chi_{\rho(c(x_i,\frac1l))}(x)\notag
\\&&=\sum_{\rho\in G}\sum_{i=1}^N\chi_{\rho(c(x_i,\frac1l))}(x)
\le\sum_{\rho\in G}1=\# G,
\end{eqnarray}
where in the last inequality we used the pairwise disjoint
property of $\{\rho( c(x_i,\frac1l))\}_{i=1}^N$ for any $\rho\in G$.
We deduce from \eqref{3.19-0} and \eqref{3.20} that for $1\le
p\le\infty$,
\begin{align}
\|(-\Delta_{h,0})^vf_a\|_{p,\kappa}
&\ll\(\int_{\ss}|\sum_{i=1}^Na_i(-\Delta_{h,0})^v\varphi_i(x)|^pd\sigma(x)\)^{1/p}\notag\\
&\le
(\#G)^{1-1/p}\(\int_{\ss}\sum_{i=1}^N|a_i|^p|(-\Delta_{h,0})^v\varphi_i(x)|^pd\sigma(x)\)^{1/p}\notag\\
&\ll l^{2v-\frac{d-1}{p}}\|a\|_{{\ell}_p^N}, \ \ v\in\Bbb
N.\label{3.21}
\end{align}
It then follows by Kolmogorov type inequality (see \cite[Theorem
8.1]{Di}), \eqref{3.21} and \eqref{3.18} that for $v>r,\ v\in\Bbb
N$,
$$\|(-\Delta_{h,0})^{r/2}f_a\|_{p,\kappa}
\ll\|(-\Delta_{h,0})^vf_a\|_{p,\kappa}^{\frac{r}{2v}}\|f_a\|_{p,\kappa}^{\frac{2v-r}{2v}}
\ll l^{r-\frac{d-1}{p}}\|a\|_{{\ell}_p^N}\ll l^r\|f_a\|_{p,\kappa}.$$
Recall that $n\asymp l^{d-1}$, hence
\begin{equation}\label{3.29}
c n^{-\frac{r}{d-1}}(BL_p(h^2_{\kappa})\cap A_N)\subset BW_p^r(h^2_{\kappa})\cap A_N,
\end{equation}
where $c$ is a positive constant independent of
$n$. We have
\begin{align*}
e_n\big(BW_p^r(h^2_{\kappa}),L_q(h^2_{\kappa})\big)&\geq
e_n\big(BW_p^r(h^2_{\kappa})\cap A_N,L_q(h^2_{\kappa})\big)
\\&\gg n^{-\frac{r}{d-1}} e_n\big(BL_p(h^2_{\kappa})\cap A_N,L_q(h^2_{\kappa})\cap A_N\big)
\\&\gg n^{-\frac r{d-1}+\frac{1}{p}-\frac{1}{q}}e_n(B\ell_p^N,\ell_q^N).
\end{align*}
The proof of Lemma 3.9 is finished.
\end{proof}

\section{Proof of Theorems 1.1}

\noindent {\it Proofs of Theorems 1.1}

First we consider
the lower estimates. For  all $k$, $m\in\mathbb{N}$, we have  (see
\cite{ET, Sc, Ku}): for $0<p\leq q\leq\infty$
\begin{equation}\label{4.1}e_k(B{\ell}_p^m,\ell_q^m)\asymp\bigg\{\begin{array}{ll}1,\ \
\ &1\leq k<\log2m,\\ \big(\frac{\log(1+\frac mk)}{k}\big)^{
1/p-1/q}, &\log 2m\leq k\leq 2m,\hspace{20pt}
\\ 2^{-\frac{k}{2m}}m^{1/q-1/p}, \ \ \ &
2m\leq k,\end{array}\end{equation}and in case $0<q<p\leq\infty$, it holds
\begin{equation}\label{4.2}e_k(B{\ell}_p^m,\ell_q^m)\asymp 2^{-k/(2m)}m^{1/q-1/p}. \end{equation}  This implies that if $m\asymp k$, then for all
$0<p,q\le\infty$,
\begin{equation}\label{4.3}e_k(B{\ell}_p^m,\ell_q^m)\asymp
k^{1/q-1/p}. \end{equation} By Lemma 3.9 and \eqref{4.3},  we obtain
the lower estimates for $e_n\big(BW_p^r(h_\kappa^2),L_q(h_\kappa^2)\big)$.

The only point remaining concerns the upper estimates.
We only need to consider the case $1\le p\le q\le\infty,$ since
for $1\le q<p\le\infty,$ we have the relation $BW_p^r(h_\kappa^2)\subset BW_q^r(h_\kappa^2),$
which implies that
$e_n\big(BW_p^r(h_\kappa^2),L_q(h_\kappa^2))\le e_n(BW_q^r(h_\kappa^2),L_q(h_\kappa^2)\big).$

Under the condition stated above, it follows from the proof in Lemma 3.5 that
\begin{align} &e_n\big(BW_p^r(h_\kappa^2),L_q(h_\kappa^2)\big)
\le\sum_{s=0}^\infty e_{n_s}\big(A_s(BW_p^r(h_\kappa^2)),L_q(h_\kappa^2)\big).\label{4.4}
\end{align}
For the convenience of estimation, we may take sufficiently small positive
number $\rho$ and define
$$n_s:=\left\{
\begin{array}{ll}
\big[\#\Lambda_s^d\cdot2^{(1-\rho)(d-1)(J-s)}\big]&\mbox{if}\,\,0\leq s\leq J,\\
  \ \big[\#\Lambda_s^d\cdot2^{(1+\rho)(d-1)(J-s)}\big]&\mbox{if}\,\,s>J,\\
\end{array}
\right.$$
here $[x]$ is the largest integer less than the real
number $x$. Since
$$\sum\limits_{s=0}^\infty n_s\ll\sum_{0\leq s\leq
J}\#\Lambda_s^d\cdot2^{(1-\rho)(d-1)(J-s)}+\sum_{s>J}\#\Lambda_s^d\cdot2^{(1+\rho)(d-1)(J-s)}\ll2^{J(d-1)},$$
we can choose an integer $J$ such that  $\sum\limits_{s=0}^\infty
n_s\leq n$ and  $2^{J(d-1)}\asymp n$.

By Lemma 3.5 , for arbitrary
$\varepsilon>0,$ take $\beta=\frac{1}{2(\gamma_k\ + \ \varepsilon)}$, we have
\begin{align}\label{4.5}
&e_n\big(BW_p^r(h_\kappa^2),L_q(h_\kappa^2)\big)
\ll\sum_{s=0}^{\infty}2^{-s\big(r-(\frac{1}{p}-\frac{1}{q})(d-1)\big)}
\sum_{k=0}^{s}(\frac{\#\Lambda_s^d}{2^{k(d-1)}})^{(\frac{1}{p}-\frac{1}{q})\frac{1}{\beta}}
e_{n_{s,k}}(B{\ell}_p^{m_{s,k}},{\ell}_q^{m_{s,k}})
\\&\ll\sum_{s=0}^{\infty}2^{-s\big(r-(\frac{1}{p}-\frac{1}{q})(d-1)\big)}
\sum_{k=0}^{s}2^{(s-k)(d-1)(\frac{1}{p}-\frac{1}{q})\frac{1}{\beta}}
e_{n_{s,k}}(B{\ell}_p^{m_{s,k}},{\ell}_q^{m_{s,k}})\notag
\\&=\sum_{0\le s< J}+\sum_{J\le s\le \frac{1+\rho}{\rho}J}+\sum_{s>\frac{1+\rho}{\rho}J}
:=I_1+I_2+I_3\notag.\end{align}

For $0\le s< J$, $n_s=[\#\Lambda_s^d\cdot2^{(1-\rho)(d-1)(J-s)}]$,
consequently we define $$n_{s,k}=[2^{(1-\rho)(d-1)(J-k)}m_{s,k}],$$ where $m_{s,k}$ is
the the same as in Lemma 3.5.
A short computation shows that
$\sum_{k=1}^s n_{s,k}\leq2^{(1-\rho)(J-s)}\#\Lambda_s^d\le n_s$
and $n_{s,k}\ge 2m_{s,k}.$

Hence, by the third case in \eqref{4.1}
\begin{align}\label{4.6}
I_1&\ll\sum_{s=0}^J2^{-s\big(r-(\frac{1}{p}-\frac{1}{q})(d-1)\big)}
\sum_{k=0}^{s}2^{(s-k)(d-1)(\frac{1}{p}-\frac{1}{q})\frac{1}{\beta}}
2^{-\frac{n_{s,k}}{2m_{s,k}}}m_{s,k}^{-(\frac{1}{p}-\frac{1}{q})}
\\&\ll\sum_{s=0}^J2^{-s\big(r-(\frac{1}{p}-\frac{1}{q})(d-1)(\frac{1}{\beta}+1)\big)}
\sum_{k=0}^{s}2^{-k(d-1)(\frac{1}{p}-\frac{1}{q})\frac{1}{\beta}}
2^{-2^{(1-\rho)(d-1)(J-k)}}2^{-k(d-1)(\frac{1}{p}-\frac{1}{q})}\notag
\\&\ll\sum_{s=0}^J2^{-s(r-(d-1)(\frac{1}{p}-\frac{1}{q})(\frac{1}{\beta}+1))}
2^{-s(d-1)(\frac{1}{p}-\frac{1}{q})(\frac{1}{\beta}+1)}
2^{-2^{(1-\rho)(d-1)(J-s)}}\notag
\\&\ll2^{-Jr}\sum_{s=0}^J2^{(J-s)r}2^{-2^{(1-\rho)(d-1)(J-s)}}\notag
\\&\ll2^{-Jr}\ll n^{-\frac{r}{d-1}}\notag
,\end{align}
which completes estimate of the first part.

Now we are ready to estimate $I_2.$ For $J <s\le\frac{1+\rho}{\rho}J,$ $n_s=[\#\Lambda_s^d\cdot2^{(1+\rho)(d-1)(J-s)}]$.
There is no loss of generality in assuming $n_s\asymp2^{J_1(d-1)},$ and
$$n_{s,k}:=\left\{
\begin{array}{ll}
\big[2^{(1-\rho)(d-1)(J_1-k)}m_{s,k}\big]&\mbox{if}\,\,0\leq k\leq J_1,\\
  \ \big[2^{(1+\rho)(d-1)(J_1-k)}m_{s,k}\big]&\mbox{if}\,\,k>J_1.\\
\end{array}
\right.$$
It also shows that
$\sum_{k=0}^s n_{s,k}\leq\#\Lambda_s^d\cdot2^{(1+\rho)(d-1)(J-s)}\le n_s$.

For convenience,
denote by $J_0:=\frac{1+\rho}{\rho}J$, we have
\begin{align}\label{4.7}
I_2&\ll\sum_{s=J}^{J_0}2^{-s\big(r-(d-1)(\frac{1}{p}-\frac{1}{q})\big)}
\sum_{k=0}^{s}2^{(s-k)(d-1)(\frac{1}{p}-\frac{1}{q})\frac{1}{\beta}}
e_{n_{s,k}}(B{\ell}_p^{m_{s,k}},{\ell}_q^{m_{s,k}})
\\&=\sum_{s=J}^{J_0}2^{-s\big(r-(d-1)(\frac{1}{p}-\frac{1}{q})(\frac{1}{\beta}+1)\big)}
\Big\{\sum_{0\le k\le J_1}2^{-k(d-1)(\frac{1}{p}-\frac{1}{q})\frac{1}{\beta}}
e_{n_{s,k}}(B{\ell}_p^{m_{s,k}},{\ell}_q^{m_{s,k}})\notag
\\&+\sum_{J_1< k\le s}2^{-k(d-1)(\frac{1}{p}-\frac{1}{q})\frac{1}{\beta}}
e_{n_{s,k}}(B{\ell}_p^{m_{s,k}},{\ell}_q^{m_{s,k}})\Big\}\notag
\\&:=\sum_{s=J}^{J_0}2^{-s\big(r-(d-1)(\frac{1}{p}-\frac{1}{q})(\frac{1}{\beta}+1)\big)}\Big\{P_1+P_2\Big\}\notag.
\end{align}

In order to get the estimate of $I_2$, we need to compute $P_1$ and $P_2$.
From the third case in \eqref{4.1}, it follows
\begin{align}\label{4.8}P_1&\ll\sum_{k\le J_1}2^{-k(d-1)(\frac{1}{p}-\frac{1}{q})\frac{1}{\beta}}
2^{-\frac{n_{s,k}}{2m_{s,k}}}m_{s,k}^{-(\frac{1}{p}-\frac{1}{q})}
\\&\ll\sum_{k\le J_1}2^{-k(d-1)(\frac{1}{p}-\frac{1}{q})\frac{1}{\beta}}
2^{-2^{(1-\rho)(d-1)(J_1-k)}}2^{-k(d-1)(\frac{1}{p}-\frac{1}{q})}
\\&\ll2^{-J_1(d-1)(\frac{1}{p}-\frac{1}{q})(\frac{1}{\beta}+1)}\notag.
\end{align}

The term $P_2$ need to be handled in a more complicated way, it deduces that

\begin{align}
P_2&=\sum_{ J_1<k\le s}2^{-k(d-1)(\frac{1}{p}-\frac{1}{q})\frac{1}{\beta}}
e_{n_{s,k}}(B{\ell}_p^{m_k},{\ell}_q^{m_k})\notag
\\&=\sum_{ J_1<k\le \frac{1+\rho}{\rho}J_1}+\sum_{ \frac{1+\rho}{\rho}J_1<k\le s}
=:Q_1+Q_2.\label{4.9}
\end{align}

For the case $J_1<k\le \frac{1+\rho}{\rho}J_1,$ recall that $n_{s,k}=[2^{(1+\rho)(d-1)(J_1-k)}m_{s,k}\big]$. It follows from the second case of \eqref{4.1} that
\begin{align}\label{4.10}
Q_1&=\sum_{ J_1<k\le \frac{1+\rho}{\rho}J_1}2^{-k(d-1)(\frac{1}{p}-\frac{1}{q})\frac{1}{\beta}}
e_{n_{s,k}}(B{\ell}_p^{m_k},{\ell}_q^{m_k})
\\&\ll\sum_{J_1< k\le {\frac{1+\rho}{\rho}J_1}}2^{-k(d-1)(\frac{1}{p}-\frac{1}{q})\frac1\beta}
\Big\{(1+\rho)(d-1)(k-J_1)\Big\}^{\frac{1}{p}-\frac{1}{q}}\notag\\&\cdot2^{(1+\rho)(d-1)(\frac{1}{p}-\frac{1}{q})(k-J_1)}
2^{-k(d-1)(\frac{1}{p}-\frac{1}{q})}\notag\\
& \ll \sum_{k\geq J_1}2^{-k(d-1)(\frac{1}{p}-\frac{1}{q})(\frac1\beta+1)}
\Big\{(d-1)(k-J_1)\Big\}^{1/p-1/q}2^{(1+\rho)(d-1)(\frac{1}{p}-\frac{1}{q})(k-J_1)}\notag\\
&\ll 2^{-J_1(d-1)(\frac{1}{p}-\frac{1}{q})(\frac1\beta+1)}\notag.
\end{align}

For the other case $\frac{1+\rho}{\rho}J_1<k\le s,$ it follows from the first case of \eqref{4.1} that
\begin{align}\label{4.11}
Q_2&=\sum_{ \frac{1+\rho}{\rho}J_1<k\le s}2^{-k(d-1)(\frac{1}{p}-\frac{1}{q})\frac{1}{\beta}}
e_{n_{s,k}}(B{\ell}_p^{m_k},{\ell}_q^{m_k})
\\&\ll2^{-\frac{1+\rho}{\rho}J_1(d-1)(\frac{1}{p}-\frac1q)\frac1\beta}
\ll2^{-J_1(d-1)(\frac{1}{p}-\frac{1}{q})(\frac1\beta+1)}\notag.
\end{align}

Combining with \eqref{4.9}-\eqref{4.11} we can get that
\begin{align}P_2\ll2^{-J_1(d-1)(\frac{1}{p}-\frac{1}{q})(\frac{1}{\beta}+1)}\label{4.12}\end{align}

By \eqref{4.7}, and the estimates of \eqref{4.8}, \eqref{4.12} and the definition of $J_1$, we finally get that
\begin{align} I_2&\ll\sum_{s=J}^{J_0}2^{-s\big(r-(d-1)(\frac{1}{p}-\frac{1}{q})(\frac{1}{\beta}+1)\big)}
2^{-J_1(d-1)(\frac{1}{p}-\frac{1}{q})(\frac{1}{\beta}+1)}\notag
\\ &\ll\sum_{s\ge J} 2^{-sr}2^{-(J-s)(1+\rho)(d-1)(\frac{1}{p}-\frac{1}{q})(\frac{1}{\beta}+1)}\notag
\\&\ll2^{-Jr}\ll n^{-\frac{r}{d-1}},\label{4.13}
\end{align}
where the last second inequality hods if \begin{align}r>(1+\rho)(d-1)(\frac{1}{p}-\frac{1}{q})(\frac{1}{\beta}+1)\label{4.14}\end{align}
for arbitrary $\varepsilon>0$ and small $\rho.$

We are left with the task of estimating $I_3$,
it follows from the property of entropy numbers and the definition of $J_0$ that
\begin{align}I_3&\ll\sum_{s\ge J_0}2^{-s\big(r-(\frac{1}{p}-\frac{1}{q})(d-1)\big)}
\sum_{k=0}^{s}2^{(s-k)(d-1)(\frac{1}{p}-\frac{1}{q})\frac{1}{\beta}}\notag\\
&\ll\sum_{s\ge J_0}2^{-s\big(r-(d-1)(\frac{1}{p}-\frac{1}{q})(\frac{1}{\beta}+1)\big)}\notag
\\&\ll2^{-J_0(r-(\frac{1}{p}-\frac{1}{q})(d-1)(\frac{1}{\beta}+1))}
\ll2^{-Jr}\ll n^{-\frac{r}{d-1}},\label{4.15}\end{align}
where in the last second inequality, we once again used \eqref{4.14}.

By the inequalities \eqref{4.5}-\eqref{4.6} and \eqref{4.13}, \eqref{4.15}, we
can deduce the upper estimate, which completes the proof.$\hfill\Box$

\section{Entropy numbers on the unit ball}
Now we consider the analogous result of entropy numbers of weighted Sobolev spaces on the unit ball $\bd$.

As introduced in the first section, take the weight function of the form
$$\omega^B_{\kappa,\mu}(x)=h_\kappa^2(x)(1-\|x\|^2 )^{\mu-1/2},\ x\in\mathbb{B}^d,$$
where $\mu>$0, $h_\kappa$ is a reflection invariant weight function on $\mathbb{R}^d$.

Denote by $L_p(\omega^B_{\kappa,\mu}),\ 1\le p<\infty$ the space of measurable functions defined on $\bd$
 with the finite norm
$$\|f\|_{p,\omega^B_{\kappa,\mu}}:=\bigg(\frac{1}{b_d^\kappa}\int_{\bd}|f(x)|^p \omega^B_{\kappa,\mu}d\sigma(x)\bigg)^{1/p}<\infty,$$
where $b_d^\kappa=\int_{\bd}\omega^B_{\kappa,\mu}dx$ is the normalization constant.
For $p=\infty$, we assume that $L_\infty$ is replaced by $C(\bd)$, the space of continuous function
on $\bd$ with the usual norm $\|\cdot\|_\infty$.

Let $\mathcal{V}_n^d(\omega^B_{\kappa,\mu})$ denote the space of orthogonal polynomials of degree $n$
 with respect to $\omega^B_{\kappa,\mu}$ on  $\bd$. There is a close relation between $h$-harmonics on the
 sphere $\mathbb{S}^{d}$ and orthogonal polynomials on the unit ball $\bd$(see \cite{Xu3} and reference in there). Denote by
$$h_{\kappa,u}(x_1,\cdots,x_{d+1})=h_{\kappa}(x_1,\cdots,x_{d})|x_{d+1}|^{\mu},\ (x_1,\cdots,x_{d+1})\in\mathbb{R}^{d+1}.$$
We can construct an one-to-one correspondence between the $h$-harmonics space $\mathcal{H}_n^d(h_{\kappa,u})$
 with respect to the function $h_{\kappa,u}$ and orthogonal polynomial space $\mathcal{V}_n^d(\omega^B_{\kappa,\mu})$. That is , for an even $Y_n\in\mathcal{H}_n^d(h_{\kappa,u})$ satisfying  $Y_n(x,x_{d+1})=Y_n(x,-x_{d+1}),$ we can write
$$Y_n(y)=r^nP_n(x), \ \ y=r(x,x_{d+1})\in\mathbb{R}^{d+1}, \ \ r=\|y\|, \ \ (x,x_{d+1})\in\Bbb {S}^{d}$$
in polar coordinates. Then $P_n$  is in $\mathcal{V}_n^d(\omega^B_{\kappa,\mu})$.

Moreover, by the elementary integral relation
$$\int_{\mathbb{S}^d}f(y)d\sigma(y)=\int_{\bd}\big\{f(x,\sqrt{1-\|x\|^2})+f(x,-\sqrt{1-\|x\|^2})\big\}dx$$
it follows that for $Y_n\in\mathcal{H}_n^d(h_{\kappa,u})$ and associated $P_n\in\mathcal{V}_n^d(\omega^B_{\kappa,\mu})$
$$\int_{\mathbb{S}^d}Y_n(y)h_{\kappa,u}(y)d\sigma(y)
=\int_{\bd}\big\{P_n(x,\sqrt{1-\|x\|^2})+P_n(x,-\sqrt{1-\|x\|^2})\big\}\omega^B_{\kappa,\mu}(x)dx.$$

Let $\Delta_h^{\kappa,\mu}$ denote the $h$-Laplacian with respect to $h_{\kappa,u}$ and $\Delta_{h,0}^{\kappa,\mu}$ the corresponding spherical $h$-Laplacian. For $y=r(x,x_{d+1}),\ \ (x,x_{d+1})\in\mathbb{S}^{d}$,
the spherical $h$-Laplacian can be written as(\cite{Xu5})
$$\Delta_{h,0}^{\kappa,\mu}=
\Delta_h-\langle x,\nabla\rangle^2-2\lambda\langle x,\nabla\rangle, \ \ \lambda=\sum_{v\in \mathcal{R}_+}\kappa_v+\mu+\frac{d-1}{2},$$
where $\Delta_h$ is the $h$-Laplacian associated with $h_\kappa$ on $\mathbb{R}^d.$
Define
$$D_{\kappa,\mu}^B:=\Delta_h-\langle x,\nabla\rangle^2-2\lambda\langle x,\nabla\rangle.$$
It follows that the elements of orthogonal polynomial subspace $\mathcal{V}_n^d(\omega^B_{\kappa,\mu})$ are
just the eigenfunctions of $D_{\kappa,\mu}^B$, that is
$$D_{\kappa,\mu}^B(P)=-n(n+2\lambda)P,\ \ P\in\mathcal{V}_n^d(\omega^B_{\kappa,\mu}).$$
For $f\in L_2(\omega^B_{\kappa,\mu}),$ the orthogonal expansion can be represented as
$$L_2(\omega^B_{\kappa,\mu})=\sum_{k=0}^\infty\bigoplus\mathcal{V}_n^d(\omega^B_{\kappa,\mu}),\ \
f=\sum_{k=0}^\infty proj_k^{\kappa,\mu}f,$$
where $proj_n^{\kappa,\mu}:L_2(\omega^B_{\kappa,\mu})\mapsto\mathcal{V}_n^d(\omega^B_{\kappa,\mu})$
is the projection operator.
The fractional power of $D_{\kappa,\mu}^B$  is defined by
$$(D_{\kappa,\mu}^B)^{r/2}(f)\sim(n(n+2\lambda))^{r/2}Pproj_n^{\kappa,\mu}(f),\ \ f\in L_p(\omega^B_{\kappa,\mu}).$$
Let $r>0, \ 1\le p\le\infty$, the Sobolev space $W_p^r(\omega^B_{\kappa,\mu},\mathbb{S}^d)$ is defined by
\begin{align*}W_p^r(\omega^B_{\kappa,\mu}):=&W_p^r(\omega^B_{\kappa,\mu},\mathbb{S}^d):=\big\{f\in L_p(\omega^B_{\kappa,\mu}) :
\|f\|_{W_p^r(\omega^B_{\kappa,\mu})}<\infty,\
\\&\exists \ g\in L_p(\omega^B_{\kappa,\mu}), such \ \ that \ \  g=(-D_{\kappa,\mu}^B)^{r/2}f \big\},\end{align*}
where $\|f\|_{W_p^r(\omega^B_{\kappa,\mu})}:=\|f\|_{p,\omega^B_{\kappa,\mu}}+
\|(-D_{\kappa,\mu}^B)^{r/2}f\|_{p,\omega^B_{\kappa,\mu}}$.
The Sobolev class $BW_p^r(\omega^B_{\kappa,\mu})$ is defined to be
the unit ball of $W_p^r(\omega^B_{\kappa,\mu})$.

For $f\in W_p^r(\omega^B_{\kappa,\mu},\mathbb{S}^d),$ define $T(f)(y)=f(x),\ \ y=(x,y_{d+1})\in\mathbb{S}^d,$
it follows from \cite[Proposition 4.1]{Xu3} that
$$\|(-D_{\kappa,\mu}^B)^{r/2}f\|_{p,\omega^B_{\kappa,\mu}}
=\|(-\Delta_{h,0}^{\kappa,\mu})^{r/2}T(f)\|_{p,\kappa,\mu}.$$

Under nearly the same argument, we can get exact order of the entropy numbers
$e_n\big(BW_{p}^r(\omega^B_{\kappa,\mu}),
L_q(\omega^B_{\kappa,\mu})\big)$  of Soblev classes  $BW_p^r(\omega^B_{\kappa,\mu})$ in $L_q(\omega^B_{\kappa,\mu})$ as stated in Theorem 1.2.
\section{Entropy numbers on the simplex}

Now we consider the problems of entropy numbers on the simplex $T^d$ with respect to the weight
$$\omega_{\kappa,\mu}^{T}(x)=h_k^2(\sqrt{x_1},\dots,\sqrt{x_d})(1-|x|)^{\mu-1/2}/\sqrt{x_1\cdots x_d},$$
where $\mu\ge1/2$ and $h_\kappa$ is a reflection invariant weight function
defined on $\mathbb{R}^d$ and $h_\kappa$ is even in each of its variables.
The last requirement essentially limits the weight functions to the case of group $\mathbb{Z}^d_2,$
for which
$$\omega_{\kappa}^T(x)=x_1^{\kappa_1-1/2}\cdot\cdot\cdot x_d^{\kappa_1-1/2}(1-|x|)^{\kappa_{d+1}-1/2}$$
which is the classical weight function on $T^d$.

For the case on the simplex $T^d$ with respect to the weight $\omega_{\kappa,\mu}^{T}(x)$, the related results
of entropy numbers can be deduced from the related results on the unit ball $\mathbb{B}^d$
with respect to the weight $\omega^B_{\kappa,\mu}(x)$.

The background on orthogonal expansion and approximation on $T^d$ is similar to the
case of the unit ball $\mathbb{B}^d$. The definitions of various notions, such as
$\|\cdot\|_{\omega_{\kappa,\mu}^{T}},\mathcal{V}_n(\omega_{\kappa,\mu}^{T})$,
are exactly the same as in the previous section with $T^d$ in place of $\mathbb{B}^d$.
There is a close relation between orthogonal polynomials on $\mathbb{B}^d$ and those on $T^d$.
Let $P_{2n}$ be an element of $\mathcal{V}_n(\omega_{\kappa,\mu}^B)$
and assume that $P_{2n}$ is even in each of its variables. Then we can write $P_{2n}$ as
$P_{2n}(x) = R_n(x_1^2,\cdot\cdot\cdot,x_d^2)$. It turns out that $R_n$ is
an element of $\mathcal{V}_n(\omega_{\kappa,\mu}^{T})$ and the relation is a one-to-one correspondence. In particular,
applying $D_{\kappa,\mu}^B$ on $P_{2n}$ leads to a second-order differential-difference operator acting on
$R_n$. Denote this operator by $D_{\kappa,\mu}^T.$ Then
$$D_{\kappa,\mu}^T(P)=-n(n+\lambda)P,\ \ P\in\mathcal{V}_n^d(\omega^T_{\kappa,\mu}),\ \ \lambda=\gamma_{\kappa}+\frac{d-1}2.$$

Denote by $L_p(\omega^T_{\kappa,\mu}),\ 1\le p<\infty$ the space of measurable functions defined on $T^d$
 with the finite norm
$$\|f\|_{p,\omega^T_{\kappa,\mu}}:=\bigg(\frac{1}{c_d^\kappa}\int_{T^d}|f(x)|^p \omega^B_{\kappa,\mu}dx\bigg)^{1/p}<\infty,$$
where $c_d^\kappa=\int_{T^d}\omega^B_{\kappa,\mu}dx$ is the normalization constant.
For $p=\infty$, we assume that $L_\infty$ is replaced by $C(T^d)$, the space of continuous function
on $T^d$ with the usual norm $\|\cdot\|_\infty$.

Let $r>0, \ 1\le p\le\infty$, under the standard argument, we can introduce the Sobolev spaces $W_p^r(\omega^T_{\kappa,\mu},T^d)$ with the norm

$$\|f\|_{W_p^r(\omega^T_{\kappa,\mu})}:=\|f\|_{p,\omega^T_{\kappa,\mu}}+
\|(-D_{\kappa,\mu}^T)^{r/2}f\|_{p,\omega^T_{\kappa,\mu}}.$$
The Sobolev class $BW_p^r(\omega^T_{\kappa,\mu})$ is defined to be
the unit ball of $W_p^r(\omega^T_{\kappa,\mu})$.

The related results for the entropy numbers of weighted Sobolev classes $BW_p^r(\omega^T_{\kappa,\mu})$ in $L_q(\omega^T_{\kappa,\mu})$ are obtained
due to the corresponding results on the unit ball $\mathbb{B}^d$.

\end{document}